\newtheorem{theorem}{Theorem}
\newtheorem{proposition}{Proposition}
\newtheorem{lemma}{Lemma}
\newcommand{\tr}{\operatorname{tr}}
\title{Causal geometries and third-order ordinary differential equations}
\author{Jonathan Holland}
\author{George Sparling}
\begin{document}
\maketitle
\begin{abstract}
We discuss contact invariant structures on the space of solutions of a third order ordinary differential equation.  Associated to any third-order differential equation modulo contact transformations, Chern \cite{Chern} introduced a degenerate conformal Lorentzian metric on the space $J^2$ of 2-jets of functions of one variable.  When the scalar invariant of W\"{u}nschmann \cite{WunschmannThesis} vanishes, the degenerate metric descends to a proper conformal Lorentzian metric on the space of solutions.  In the general case, when the W\"{u}nschmann invariant is not zero, we define the notion of a causal geometry, and show that the space of solutions supports one.   The W\"{u}nschmann invariant is then related to the projective curvature of the indicatrix curve cut out by the causal geometry in the projective tangent space. When the W\"{u}nschmann vanishes, the causal geometry is then precisely the sheaf of null geodesics of the Chern conformal structure.   We then introduce a Lagrangian and associated Hamiltonian from which the degenerate conformal Lorentzian metric are constructed.  Finally, necessary and sufficient conditions are given for a rank three degenerate conformal Lorentzian metric in four dimensions to correspond to a third-order differential equation.
\end{abstract}

\nocite{*}

\section{Introduction}
The purpose of the present paper is to study the geometry of third-order ordinary differential equations: equations of the form
$$y''' = F(x,y,y',y'').$$
By setting $p=y'$ and $q=y''$, the solutions of this equation coincide with curves in the second jet space $J^2$ with coordinates $(x,y,p,q)$ that are everywhere tangent to the contact distribution annihilated by the one-forms
$$\theta_1 = dy-pdx,\quad \theta_2=dp-qdx$$
and that are also annihilated by the one-form
$$\omega = dq-F(x,y,p,q)dx.$$
A contact transformation $\Phi:J^2\to J^2$ is a local diffeomorphism that preserves the contact filtration on $J^2$, meaning that
$$\Phi^*\theta_1\equiv 0 \pmod{\theta_1},\quad\Phi^*\theta_2\equiv 0\pmod{\theta_1,\theta_2}.$$
Contact transformations act on the set of differential equations by composition.  The paper is concerned with structures that are invariant with respect to transformations of this form.  An alternative characterization of third-order equations and contact transformations relying only on the first jet space $J^1$ and the contact structure associated to $\theta_1$ will be given in Section \ref{ThirdOrderDifferentialEquations}.

The equivalence problem for third-order ordinary differential equations has a long history.  It was first studied under the class of point transformations by \'{E}lie Cartan \cite{CartanThirdOrder}.  In 1940, S. S. Chern \cite{Chern} focused on the equivalence problem under contact transformations.  A certain scalar invariant of the structure, called the W\"{u}nschmann invariant, divides the family of third-order equations into two classes.  Those with vanishing W\"{u}nschmann invariant admit a natural conformal Lorentzian structure on the space of solutions.  For Chern, this conformal structure presented itself in the form of a normal $SO(3,2)$ Cartan connection on the solution space or, what is the same, a conformal Lorentzian metric on that space.  A geometrical description of this conformal structure was presented much later by Fritelli, Kozameh, and Newman \cite{FKN}, who showed that the general third-order equation with vanishing W\"{u}nschmann invariant could be obtained by considering one-parameter families of null hypersurfaces in a 3-dimensional space with a conformal Lorentzian metric.  More directly, as discussed in Section \ref{ConformalStructure}, this case can be understood also in terms of the null geodesic spray on the bundle of null rays: null hypersurfaces then being related by means of an envelope construction.

Chern also determined all of the contact invariants of the general third-order equation in which the W\"{u}nschmann is nonzero.  This was presented in the modern language of bundles and connections by Sato and Yoshikawa \cite{SatoYoshikawa}, who in addition clarified the geometrical interpretation of the structure as a normal $SO(3,2)$ Cartan connection on the space $J^2$.  Nurowski and the second author later showed the existence of a conformal $O(3,3)$ structure on a certain fiber bundle over $J^2$ (see \cite{NurowskiConformal}).  This $O(3,3)$ structure is of Fefferman \cite{Fefferman} type, in the sense of Graham \cite{Graham}, if and only if the W\"{u}nschmann invariant vanishes.  Godli\'{n}ski \cite{Godlinski} then proved that the associated normal $SO(4,4)$ Cartan connection included the Chern--Sato--Yoshikawa connection as its $\mathfrak{o}(3,2)$ part.

The structure of Chern--Sato--Yoshikawa can doubtless be understood directly in terms of the geometry of the space of solutions of the differential equation.  However, whereas when the W\"{u}nschmann invariant vanishes, there is a standard geometry underlying the presence of certain connections---namely a conformal Lorentzian metric on the space of solutions---when the W\"{u}nschmann invariant is nonzero, such an underlying geometry appears to be missing.  The main task of the present article is to supply the missing geometry and to examine its precise relationship with these constructions.

Invariantly associated to the structure on $J^2$ is a degenerate conformal Lorentzian metric (see Nurowski \cite{NurowskiConformal}) whose degeneracy is in the direction of the total derivative vector field coming from the differential equation.  The second task of the present article is to show that this degenerate conformal structure arises naturally from elementary geometrical constructions on a certain curve (the {\em polar curve}) in the projective cotangent bundle of the space of solutions.  The W\"{u}nschmann invariant itself is precisely the projective curvature of this curve.  Finally, the paper introduces a new conformal invariant of fourth order in the metric that gives a complete geometrical characterization of degenerate metrics that arise in this manner from third-order differential equations.

The first innovation of the paper is that of an {\em incidence relation} on the space of solutions, described in Section \ref{Incidence}.  An infinitesimal or linearized version of this idea is implicit in W\"{u}nschmann's \cite{WunschmannThesis} investigations into Monge equations of the second degree (see also \cite{Liebmann} and \cite{Chern}).  When the W\"{u}nschmann invariant vanishes, two solutions are incident if and only if they lie on the same null geodesic.  When the W\"{u}nschmann is nonzero, the incidence relation still defines a decent structure in a sense that is axiomatized in Section \ref{CausalStructures}: roughly, the sheaf of curves defining the incidence relation is envelope-forming.  Such a family of envelope-forming curves is dubbed a {\em causal geometry}, the terminology suggested by an affinity with structures that typically arise in the study of hyperbolic partial differential equations.  The space of incidence curves in the solution space corresponds naturally to the points of the 1-jet space $J^1$.  When the incidence curves through a point are linearized at that point, the resulting cone in the tangent space resembles the null cone associated to a Lorentzian structure.  The null cone projects to a curve in the projective tangent space, called the {\em indicatrix} curve, borrowing terminology from optics \cite{Arnold}.

The indicatrix gives rise to a Lagrangian in a natural manner that can be written down in terms of the general solution of the differential equation, as described in Section \ref{Lagrangian}.  The Lagrangian is a function on the tangent bundle which is homogeneous of degree two with respect to the scalar homothety of the bundle.  It is not fully contact-invariant, but its locus of zeros in the projective tangent bundle is invariant, and coincides with the indicatrix.  Null geodesics---extremals of the Lagrangian along which the Lagrangian vanishes identically---are precisely the incidence curves.  The resulting structure is a Finsler \cite{Finsler} analog of conformal Lorentzian geometry.

The Lagrangian is in addition regular at every point of the indicatrix, and therefore gives rise to a Hamiltonian on the cotangent bundle, which is described in Section \ref{Hamiltonian}. The zero locus of the Hamiltonian inside the projective cotangent space is the polar curve of the indicatrix.  The total space of the indicatrix or its polar curve defines a 4-dimensional bundle over the space of solutions, and the projective Hamiltonian spray defines a projective vector field on this bundle.  The 3-dimensional quotient space under the flow of the vector field inherits a natural contact form from the cotangent bundle.  The resulting space is contactomorphic to $J^1$, the space of 1-jets in the plane, and on it the polar curves descend to a path geometry that defines a third-order differential equation in the manner described in Section \ref{ThirdOrderDifferentialEquations}.

The entire procedure sketched here is summarized in the theorem.

\begin{theorem}\label{locallyisomorphic}
There is a natural local isomorphism between the set of third-order equations under contact equivalence and the set of isomorphism classes of causal geometries.
\end{theorem}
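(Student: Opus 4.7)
The plan is to establish the theorem by constructing two mutually inverse functors between the relevant categories, and verifying their inverseness by tracking structures through the constructions assembled in the body of the paper. The forward functor sends a third-order ODE (modulo contact transformations) to the causal geometry on its solution space defined by the incidence relation of Section \ref{Incidence}. One inputs the ODE's $\theta_1,\theta_2,\omega$ data on $J^2$, passes to the space of solutions $M$, and defines the sheaf of incidence curves as in Section \ref{Incidence}; the axioms of Section \ref{CausalStructures} then certify that this sheaf is a causal geometry. Contact equivalent ODEs determine the same foliation of $J^2$ by solutions and the same incidence relation on $M$, hence isomorphic causal geometries, so the functor descends to equivalence classes.

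The backward functor is the more substantial construction, occupying Sections \ref{Lagrangian} through \ref{ThirdOrderDifferentialEquations}. Given a causal geometry on $M$, the indicatrix curves in $\mathbb{P}(TM)$ assemble into a Lagrangian on $TM$, homogeneous of degree two, whose zero locus in $\mathbb{P}(TM)$ recovers the indicatrix. Regularity of the Lagrangian along the indicatrix permits a Legendre transform producing a Hamiltonian on $T^*M$, and the zero locus in $\mathbb{P}(T^*M)$ is the polar curve, giving a 4-dimensional bundle $N \to M$. The projective Hamiltonian spray is a vector field on $N$; the 3-dimensional quotient inherits the canonical contact structure from $T^*M$ and is contactomorphic to a copy of $J^1$, while the polar curves project to a path geometry encoding a third-order ODE (the construction of Section \ref{ThirdOrderDifferentialEquations} reverses the standard presentation of a third-order ODE as a path geometry on $J^1$).

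To show these are mutually inverse, I would trace a loop in each direction. Starting with an ODE and forming its causal geometry: the asserted identification in Section \ref{Lagrangian} that incidence curves are precisely the null extremals of the associated Lagrangian is the key input. This means that when one reconstructs the path geometry from the polar curves, one recovers the original null geodesic spray on $J^2$, and the quotient $N/{\sim}$ is identified canonically with the original $J^1$, so the recovered ODE matches the original up to contact equivalence. In the opposite direction, starting with a causal geometry, one forms the corresponding ODE; its solutions are by construction the curves in $M$ covered by the projective spray, which are exactly the original incidence curves. Naturality of each step with respect to diffeomorphisms preserving the data then gives the bijection between contact equivalence classes of ODEs and isomorphism classes of causal geometries.

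The main obstacle is the verification, at a single point $m \in M$, that the projective spray of the Hamiltonian derived from the indicatrix coincides with the lift of the total derivative of the original ODE once one has identified the polar-curve bundle $N$ with a subbundle of $\mathbb{P}(T^*M)$ coming from $J^2$. This is essentially a local computation in adapted coordinates, but its clean articulation requires that the Legendre transform and the contact quotient be compatible with the solution-space fibration $J^2 \to M$ already used in the forward direction. A subordinate difficulty is verifying that an isomorphism of causal geometries lifts uniquely to a contact transformation of $J^2$; this follows once one observes that the lift is determined by the requirement that it carries the Hamiltonian spray of one to the other, but one must check that the required lift is smooth globally on $J^2$ rather than only locally along individual null curves.
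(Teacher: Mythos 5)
Your proposal follows essentially the same route as the paper: the forward direction is the incidence-relation causal geometry of Theorem \ref{causalTheorem}, and the backward direction is the Lagrangian--Legendre transform--Hamiltonian spray chain, with the recovered $\mathbb{S}$ identified with $J^2$ and the splitting $T^1\subset T^2$ furnished by the vertical direction yielding the ODE. The ``main obstacle'' you defer is precisely what the paper resolves by the angular-momentum computation showing $X$, $V$, $[X,V]$ are independent (so that Lemma \ref{J2characterization} applies), after which reversibility of the construction closes the loop.
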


The rest of the paper is devoted to studying the degenerate rank three conformal Lorentzian metrics $g$ on a four-dimensional space $N$.  The fundamental invariant is
$$\Gamma = g \wedge \mathscr{L}_V g \wedge\mathscr{L}_V^2 g \wedge \mathscr{L}_V^3 g \wedge \mathscr{L}_V^4 g \in \wedge^5 S^2\ker (V\lrcorner) \cong S^2(TN/V)$$
where $V$ is the degenerate direction.  Here $\ker (V\lrcorner)$ is the space of one-forms annihilated by $V$, and each of the Lie derivatives $\mathscr{L}_V^kg$ lies in the symmetric square $S^2\ker (V\lrcorner)$.  The space $S^2\ker(V\lrcorner)$ is six-dimensional, and its fifth exterior power is naturally isomorphic to the symmetric square $S^2(TN/V)$ of the quotient of the tangent bundle of $N$ by the vertical direction $V$.

\begin{theorem}
The degenerate Lorentzian metric $g$ on the four-manifold $N$ arises (locally) from a third-order differential equation if and only if either
\begin{enumerate}
\item $\Gamma$ is nonzero and the classical adjoint of $\Gamma$ vanishes identically (equivalently, $\Gamma$ has rank $1$).  In this case, there exists a natural conformal isometry of $N$ with $J^2$ equipped with its invariant degenerate metric coming from a third-order differential equation with non-zero W\"{u}nschmann invariant.
\item $\mathscr{L}_Vg$ is proportional to $g$.  In this case, the W\"{u}nschmann invariant vanishes and there exists  a conformal isometry of $N$ with $J^2$ equipped with its invariant degenerate metric coming from a third-order differential equation that is natural up to a gauge transformation of $J^2$.
\end{enumerate}
\end{theorem}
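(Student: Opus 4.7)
The plan is to prove necessity by direct computation on $J^2$ in adapted coordinates, and sufficiency by reducing to Theorem \ref{locallyisomorphic} via the causal geometry construction of the preceding sections. For necessity, I work on $J^2$ with coordinates $(x,y,p,q)$, take the degenerate direction to be $V = \partial_x + p\partial_y + q\partial_p + F\partial_q$, and use the coframe $\theta_1 = dy - p\,dx$, $\theta_2 = dp - q\,dx$, $\omega = dq - F\,dx$ spanning $\ker(V\lrcorner)$. A direct calculation gives $\mathscr{L}_V\theta_1 = \theta_2$, $\mathscr{L}_V\theta_2 = \omega$, and $\mathscr{L}_V\omega = F_y\theta_1 + F_p\theta_2 + F_q\omega$, so $\mathscr{L}_V$ acts on $\ker(V\lrcorner)$ by a companion matrix and therefore by an explicit derivation on the six-dimensional symmetric square $S^2\ker(V\lrcorner)$. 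Starting from Chern's normal form for $g$ (with leading part $2\theta_1\omega - \theta_2^2$) and computing the iterated Lie derivatives, one finds that the five tensors $\mathscr{L}_V^k g$ for $k=0,\dots,4$ generically span a codimension-one subspace of $S^2\ker(V\lrcorner)$, and the top wedge $\Gamma$ measures the obstruction to a linear dependence. A direct identification shows that this obstruction is proportional to (a power of) the W\"unschmann invariant $W$. Hence $W\neq 0$ forces $\Gamma$ to have rank one in $S^2(TN/V)$, giving case (1); whereas $W=0$ yields $\Gamma=0$, which a separate check shows is equivalent to $V$ being a conformal Killing field of $g$, giving case (2).

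For sufficiency, suppose $g$ on $N$ satisfies one of the two conditions. In case (2), $V$ is conformally Killing, so the conformal class of $g$ descends locally to a nondegenerate conformal Lorentzian metric on the 3-manifold $N/\langle V\rangle$; the null geodesic spray of that metric reconstructs a third-order ODE with vanishing W\"unschmann invariant by the Fritelli--Kozameh--Newman construction recalled in Section \ref{ConformalStructure}, the residual indeterminacy being precisely the $J^2$ gauge freedom claimed in the theorem. In case (1), the rank-one factorization $\Gamma = \xi^{\otimes 2}$, with $\xi \in TN/V$ well-defined up to sign, selects a distinguished direction transverse to $V$; iterating $\mathscr{L}_V$ on $\xi$ and using the degenerate metric $g$ produces the indicatrix curve in the projective tangent bundle, which in turn determines the Lagrangian of Section \ref{Lagrangian} and the associated Hamiltonian polar curve of Section \ref{Hamiltonian}. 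After checking that these data satisfy the envelope-forming axioms of Section \ref{CausalStructures}, so that $(N,g,V)$ supports a causal geometry, Theorem \ref{locallyisomorphic} supplies the required ODE together with a natural conformal isometry of $N$ with $J^2$.

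The main obstacle will be the sufficiency half of case (1): verifying that the direction $\xi$ extracted from the rank-one condition on $\Gamma$, together with the iterated Lie derivatives of $g$ along $V$, genuinely assembles into an envelope-forming family of curves in the sense of Section \ref{CausalStructures}. Concretely, one must check that the projective curve generated by $\xi$ and its $V$-derivatives has the correct osculating filtration (compatible with the one read off from the tensors $\mathscr{L}_V^k g$), and that the Lagrangian constructed from this indicatrix reproduces the original conformal class of $g$ rather than some conformal relative. Once this compatibility is in place, the local isomorphism of Theorem \ref{locallyisomorphic} delivers the third-order ODE, and naturality of the conformal isometry $N\cong J^2$ follows from the fact that $\xi$ and $\Gamma$ are themselves canonically defined from the pair $(g,V)$.
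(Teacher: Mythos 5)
Your overall architecture matches the paper's: both arguments reduce the question to the rank of the span of $g,\mathscr{L}_Vg,\dots,\mathscr{L}_V^4g$ inside the six-dimensional space $S^2\ker(V\lrcorner)$ (equivalently, the rank of the linear system \eqref{cramdownsystem} on the fiberwise family of osculating conics $A(t)$), recover the indicatrix point from the rank-one factorization of $\Gamma$, and feed the resulting curve field back into the causal-geometry machinery of Theorem \ref{locallyisomorphic}. But there is a genuine gap in your necessity argument for case (1). Your computation is aimed at showing that $W\neq 0$ gives $\Gamma\neq 0$, and you then write ``Hence $W\neq 0$ forces $\Gamma$ to have rank one''---nothing in what precedes produces the vanishing of the classical adjoint. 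The missing idea is the symmetry of the osculation condition: the degenerate metric of an ODE is, along each fiber, the family of conics $A(t)$ osculating the indicatrix $\phi$ to fifth order, so $\phi(t)^TA(t_0)\phi(t)=O\bigl((t-t_0)^5\bigr)$; interchanging $t$ and $t_0$ gives $\phi(t_0)^TA^{(k)}(t_0)\phi(t_0)=0$ for $k=0,\dots,4$, i.e.\ the rank-one tensor $\phi(t_0)\phi(t_0)^T$ annihilates all five Lie derivatives under the trace pairing. When those five tensors are independent, the annihilator is one-dimensional, so $\Gamma$ is forced to be proportional to $\phi(t_0)\phi(t_0)^T$ and hence of rank one. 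Without this step the defining condition of case (1) is asserted rather than proved.

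A second, smaller error: you claim that ``a separate check shows'' $\Gamma=0$ is equivalent to $V$ being a conformal Killing field of $g$. That equivalence is false for a general degenerate metric---the intermediate ranks $2$, $3$, $4$ of the system \eqref{cramdownsystem} all give $\Gamma=0$ with $\mathscr{L}_Vg$ not proportional to $g$, and the paper devotes a subsection to showing precisely that these cases do not come from ODEs. What the necessity direction actually requires is the dichotomy that for ODE metrics the rank is either $5$ or $1$, together with the classical fact that $W=0$ if and only if $\mathscr{L}_Vg$ is proportional to $g$; neither follows from $\Gamma=0$ alone. Your sufficiency argument is sound in outline and is, if anything, more explicit than the paper about verifying the causal-geometry axioms for the reconstructed indicatrix; note, though, that the paper shortcuts this verification by observing that once $\phi,\phi',\phi''$ are independent, $\phi$ satisfies a third-order linear equation whose osculating conics reproduce $A(t)$, identifying $g$ directly with the metric of Section \ref{Metric}.
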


\section{Third-order differential equations}\label{ThirdOrderDifferentialEquations}
A third-order differential equation under contact equivalence can be conveniently regarded as the following data.
\begin{enumerate}
\item A three-dimensional contact manifold $J^1$.
\item A generic three-parameter family of (unparameterized) contact curves in $J^1$.
\end{enumerate}
The contact structure on $J^1$ can represented as a one-form $\theta$ which is only invariantly defined up to scale.  By Darboux' theorem, there exist coordinates $(x,y,p)$ on $J^1$ such that $\theta=dy-pdx$.  A contact curve is a curve whose tangent annihilates $\theta$ at each point.  The family of curves is generic if at each given point of $J^1$ and tangent direction $v$ at $x$ annihilating $\theta$ there exists a unique curve through $x$ with tangent along $v$.  These curves are identified with the (prolongation of) solutions of the differential equation.  Two such structures are locally equivalent if there is a local diffeomorphism of $J^1$ to itself that preserves the contact structure and sends one system of curves to the other.

Given a third-order differential equation, it is clear how to generate such a structure by prolongation (see, for instance, \cite{Olver}), and the resulting structure depends only on the contact-equivalence class of the differential equation, by B\"{a}cklund's theorem.  Conversely, suppose we have chosen coordinates $(x,y,p)$ on $J^1$ such that $\theta=dy-pdx$.  The distinguished class of curves is of the form
\begin{align*}
x&=\chi(s;a,b,c)\\
y&=\psi(s;a,b,c)\\
p&=\pi(s;a,b,c)
\end{align*}
where $a,b,c$ are the three parameters defining a curve in the class, and $s$ is an evolution parameter of the curve.  There is a gauge freedom in selecting the parameterization $s$ of the curve, and so this freedom is eliminated this by imposing the condition $dx/ds=1$  (that is, by effectively taking $x$ itself to be the parameter).  The contact relation takes the form $p=dy/dx$.  Imposing this relation and differentiating $\psi$ three times gives the system of equations
\begin{align*}
y&=\psi(x;a,b,c)\\
y'&=\psi_x(x;a,b,c)\\
y''&=\psi_{xx}(x;a,b,c)\\
y'''&=\psi_{xxx}(x;a,b,c).
\end{align*}
Solving the first three equations for $a,b,c$ in terms of $x,y',y''$ and substituting the result into the third equation gives a third-order differential equation $y'''=F(x,y,y',y'')$.

Associated to the jet space $J^1$ there is naturally a fibration $\pi:J^2\to J^1$. This is the subbundle of the projective tangent bundle $\mathbb{P}TJ^1$ of $J^1$ given as the annihilator of $\theta$: $J^2=\theta^\perp$.  Specifically, $J^2$ is given fiberwise by
$$J^2_x = \left\{v\in \mathbb{P}T_xJ^1 \mid v\lrcorner\theta = 0\right\}.$$
It is a four-dimensional space fibered over $J^1$ with $S^1$ fibers. The space $J^2$ defined here supports the following contact-invariant structure, independently of the differential equation.  This characterization is the four-dimensional analog of structures studied in five dimensions by the authors in \cite{DoubrovHollandSparling}.

\begin{lemma}\label{J2}
\begin{enumerate}
\item There exists a natural filtration
$$T^1 \subset T^2 \subset  T^3 \subset  T^4 =  TJ^2$$
of the tangent bundle of $J^2$.  Here $T^1$ is the vertical distribution for the fibration $J^2\to J^1$, $T^2$ is a tautological bundle of $2$-planes, and $T^3$ is the annihilator of the pullback of $\theta$.
\item On sections, $[\Gamma(T^1),\Gamma(T^2)]=[\Gamma(T^1),\Gamma(T^3)]=\Gamma(T^3)$ and $[\Gamma(T^3),\Gamma(T^3)]=\Gamma(T^4)$.
\end{enumerate}
\end{lemma}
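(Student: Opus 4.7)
The plan is to carry out the proof in Darboux coordinates and then verify naturality from the intrinsic descriptions of each layer of the filtration. First I would fix coordinates $(x,y,p)$ on $J^1$ in which $\theta = dy - p\,dx$, and parametrize the fiber of $J^2 \to J^1$ over $(x,y,p)$ by $q$ so that the tautological line at $(x,y,p,q) \in J^2$ is spanned by $D := \partial_x + p\partial_y + q\partial_p$. In these coordinates the three layers of the filtration have the explicit frames
\[
T^1 = \langle \partial_q\rangle,\qquad T^2 = \langle \partial_q,\, D\rangle,\qquad T^3 = \langle \partial_q,\, D,\, \partial_p\rangle,
\]
and $\partial_y$ completes a frame of $T^4 = TJ^2$.

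For part (1), the inclusion $T^1 \subset T^2$ is immediate from the definition of $T^2$ as the $d\pi$-preimage of the tautological line, and $T^2 \subset T^3$ follows because $\theta(D) = p - p = 0$, so $d\pi$ sends $T^2$ into $\ker\theta$. Naturality is intrinsic for each layer: $T^1$ is vertical for $\pi$, $T^2$ is tautological, and $T^3 = \ker \pi^*\theta$ depends only on the conformal class of $\theta$, hence only on the contact structure of $J^1$. For part (2), I would compute the Lie brackets of the frame fields directly; the only nonzero ones among $\partial_q, D, \partial_p, \partial_y$ are
\[
[\partial_q, D] = \partial_p, \qquad [D, \partial_p] = -\partial_y.
\]
The first shows that $[\Gamma(T^1),\Gamma(T^2)]$, closed under $C^\infty$-combinations, recovers $\Gamma(T^3)$; the vanishing $[\partial_q, \partial_p] = 0$ then gives the same conclusion for $[\Gamma(T^1),\Gamma(T^3)]$. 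The second supplies the missing direction $\partial_y$, and thus $[\Gamma(T^3),\Gamma(T^3)] = \Gamma(T^4)$.

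The calculation is essentially routine and no serious obstacle arises. The one point worth emphasizing is the correct setup of $T^2$ as the $d\pi$-preimage of the tautological line, which simultaneously makes naturality transparent and reduces the inclusion $T^2 \subset T^3$ to a one-line verification. The equalities in part (2) are to be read in the standard Tanaka sense, as equalities of $C^\infty$-modules generated by brackets together with the previous filtration level, since a single Lie bracket of sections yields the new direction only modulo lower levels.
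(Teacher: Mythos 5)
Your proof is correct and follows essentially the same route as the paper's: fix Darboux coordinates, exhibit the frame $\partial_q$, $D=\partial_x+p\partial_y+q\partial_p$, $\partial_p$, $\partial_y$ adapted to the filtration, and read off the bracket relations $[\partial_q,D]=\partial_p$ and $[D,\partial_p]=-\partial_y$ (the paper compresses this to ``the lemma follows by taking commutators''). Your explicit remarks on the $d\pi$-preimage description of $T^2$ and on the Tanaka-style reading of the bracket equalities are accurate clarifications of what the paper leaves implicit.
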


Specifically, $T^2$ is the tautological $2$-plane bundle whose fiber at a point $(x,u)\in J^2_x \subset \mathbb{P}T_xJ^1$ consists of all vectors $v$ such that $\pi_*v$ is in the direction of $u$. In terms of the $(x,y,p)$ coordinates on $J^1$, any vector field of the form $\partial/\partial x + p\partial/\partial y + q\partial/\partial p$ annihilates the contact form $\theta=dy-pdx$.  Therefore this $q$ defines a fiber coordinate for the fibration $J^2\to J^1$ that allows the vector fields generating $T^2J^2$ to be expressed as $X = \partial/\partial q$, the vertical vector field for the fibration, and $\partial/\partial x + p\partial/\partial y + q\partial/\partial p$.  The lemma follows by taking commutators.

\begin{lemma}\label{J2characterization}
Any two four-manifolds equipped with this structure are locally isomorphic: a filtration $T^1\subset T^2\subset T^3\subset T^4$ on the tangent bundle, such that  $[\Gamma(T^1),\Gamma(T^2)]=[\Gamma(T^1),\Gamma(T^3)]=\Gamma(T^3)$ and $[\Gamma(T^3),\Gamma(T^3)]=\Gamma(T^4)$.
\end{lemma}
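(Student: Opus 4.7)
The plan is to descend the structure to a three-dimensional quotient carrying a contact distribution, normalize that contact structure by Darboux's theorem, and then reconstruct the original four-manifold as the bundle of tangent directions inside the contact distribution. This mirrors the coordinate description given just before the lemma.

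First, since $T^1$ is a line bundle it is integrable by Frobenius, producing a local fibration $\pi\colon N\to M$ with $M$ three-dimensional and $T^1 = \ker \pi_*$.  The bracket relation $[\Gamma(T^1),\Gamma(T^3)]=\Gamma(T^3)$ says $T^3$ is invariant under the flow of $T^1$; since $T^1\subset T^3$, the quotient $T^3/T^1$ descends to a rank-two distribution $D\subset TM$. Passing the identity $[\Gamma(T^3),\Gamma(T^3)]=\Gamma(T^4)$ to the quotient shows that $[\Gamma(D),\Gamma(D)] + \Gamma(D) = \Gamma(TM)$, so $D$ is a contact distribution on the three-manifold $M$.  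Darboux's theorem then supplies coordinates $(x,y,p)$ on $M$ in which $D = \ker(dy - p\,dx)$.

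Next I would reconstruct $N$ as (an open piece of) the projectivized contact bundle $\mathbb{P}(D)\to M$ via the tautological map
$$\Phi\colon N \longrightarrow \mathbb{P}(D),\qquad z \longmapsto \bigl(T^2_z/T^1_z \subset D_{\pi(z)}\bigr).$$
The map $\Phi$ covers $\pi$ and sends each $T^1$-orbit into a fiber $\mathbb{P}(D_{\pi(z)})\cong\mathbb{RP}^1$.  The remaining bracket $[\Gamma(T^1),\Gamma(T^2)]=\Gamma(T^3)\neq\Gamma(T^2)$ is exactly the statement that the line $T^2/T^1$ in $D$ is not preserved by the flow of $T^1$; quantitatively, the derivative along a $T^1$-orbit of the curve $z\mapsto T^2_z/T^1_z$ is nonzero in $T(\mathbb{P}D)/\text{horizontal}$, so $\Phi$ restricts to a local diffeomorphism on each fiber.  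Combined with the fact that $\Phi$ covers the identity on the base, a dimension count makes $\Phi$ a local diffeomorphism in a neighborhood of any chosen point.

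Finally, choose a fiber coordinate $q$ on $\mathbb{P}(D)\to M$ adapted to the Darboux frame so that the direction parametrized by $q$ in $D_{(x,y,p)}$ is spanned by $\partial_x + p\,\partial_y + q\,\partial_p$.  Pulling back by $\Phi$, the resulting chart $(x,y,p,q)$ on $N$ realizes $T^1 = \langle \partial_q\rangle$, $T^2 = \langle \partial_q,\,\partial_x + p\,\partial_y + q\,\partial_p\rangle$, and $T^3 = \ker \pi^*(dy-p\,dx)$, matching the explicit model in Lemma~\ref{J2} point by point.  Any two such structures are therefore locally isomorphic to this common normal form.

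The step I expect to be delicate is showing that $\Phi$ is a local diffeomorphism: all of the base-level data is handled by Darboux, but the vertical injectivity and submersivity come from extracting a nonvanishing angular velocity from the abstract bracket identity $[\Gamma(T^1),\Gamma(T^2)]=\Gamma(T^3)$, and this must be done in a frame-independent way so that the identification $\Phi$ is well-defined before any coordinates are chosen on $N$.
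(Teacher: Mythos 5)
Your argument is correct, and its overall skeleton coincides with the paper's: quotient locally by the flow of $T^1$, observe that $T^3$ descends to a nowhere-integrable plane field on the three-dimensional quotient, and invoke Darboux to put the base in the normal form $(x,y,p)$, $\theta=dy-p\,dx$. Where you genuinely diverge is in how the fourth coordinate is produced. The paper stays on the four-manifold: it picks an arbitrary fiber coordinate $q$ with $X=\partial/\partial q$, shows $T^2$ must contain a solution $Y$ of $\mathscr{L}_XY=\partial/\partial p$, and then absorbs the residual freedom $Y\mapsto Y+\lambda\,\partial/\partial p$ (with $X(\lambda)=0$) into a shift $q\mapsto q+\mu$ by solving a first-order PDE for $\mu$. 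You instead build the canonical map $\Phi\colon z\mapsto \pi_*T^2_z\in\mathbb{P}(D_{\pi(z)})$ and verify it is a fiberwise immersion; your identification of the nonvanishing ``angular velocity'' with $[X,Y]\bmod T^2$ is exactly right, since $\frac{d}{dt}\,\pi_*\bigl(Y_{\phi_t(z)}\bigr)=\pi_*[X,Y]_z$ and the hypothesis $[\Gamma(T^1),\Gamma(T^2)]=\Gamma(T^3)$ forces this to lie outside $\pi_*T^2_z$. Your route buys invariance (no PDE to solve, and $T^2$ is carried to the tautological plane bundle automatically, since both $\Phi_*T^2_z$ and $T^2J^2$ are the two-plane $\pi_*^{-1}$ of the tautological line), at the cost of having to argue that $\Phi$ is a local diffeomorphism; the paper's route is more computational but produces the coordinates directly. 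One small point that both arguments leave implicit: the affine fiber chart $q$ omits the direction $\partial/\partial p$, so if $\pi_*T^2$ happens to equal $\langle\partial/\partial p\rangle$ at the chosen base point one must first compose with a contactomorphism of $(J^1,\theta)$ (e.g.\ a Legendre transformation) to move that direction; this is harmless for a local statement but worth saying.
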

\begin{proof}
Let $M$ be a manifold equipped with such a filtration and let $\theta$ be a nonvanishing one-form annihilating $T^3M$. Let $N$ be the 3-manifold obtained by passing to the (locally defined) quotient modulo the flow of $T^1M$.  The distribution $T^3M$ is Lie derived along $T^1M$, and so descends to a distribution of $2$-planes on $N$.  Nowhere is this distribution Frobenius integrable, and so it defines a contact structure on $N$.  By Darboux' theorem, $N$ is locally contactomorphic to $J^1$ with its standard contact structure and coordinates $(x,y,p)$.   Letting $q$ be a fiber coordinate on $M\to N$, $X=\partial/\partial q$ generates $T^1M$ and $\theta=dy-p\,dx$.  Plane subbundles of $T^3$ on which $\mathscr{L}_X$ maps surjectively onto $T^3$ are all related by a change in the fiber coordinate $q$.  Indeed, as $X$ commutes with $\partial/\partial p$, the latter vector field does not lie in $T^2$.  The bundle $T^2$ must contain a solution $Y$ of $\mathscr{L}_X Y = \partial/\partial p.$  One such solution is $Y=\partial/\partial x + p\partial/\partial y + q\partial/\partial p$, and the ambiguity, modulo $T^1$, in the solution is a transformation of the form $Y\mapsto Y + \lambda \partial/\partial p$ where $X(\lambda)=0$.  Noting that $\lambda$ is independent of $q$, this ambiguity in the choice of $Y$ can be absorbed into a change of coordinates $q\mapsto q + \mu$ where $\mu(x,y,p)$ satisfies the differential equation $\mu-\mu_x-\mu_y-\mu_p=\lambda$.  So we are free to choose the fiber coordinate $q$ so that $T^2$ is generated by $X$ and $Y=\partial/\partial x + p\partial/\partial y + q\partial/\partial p$. The coordinates $(x,y,p,q)$ now defined on $M$ establish a local diffeomorphism with $J^2$ that sends $T^1M$, $T^2M$, and $T^3M$ to $T^1J^2$, $T^2J^2$, and $T^3J^2$, respectively.
\end{proof}


The differential equation is specified in terms of a splitting of the first level of the filtration $T^1J^2\subset T^2J^2$.  This is achieved by means of a vector field $V$ of the form $V=\partial/\partial x + p\partial/\partial y + q\partial/\partial p + F(x,y,p,q)\partial/\partial q$ representing the total derivative.  While $V$ itself is not contact-invariant, the splitting direction consisting of all multiples of $V$ is. This splitting can invariantly be described in terms of the system of curves on $J^1$ that gives the differential equation.  Lying over a point $x\in J^1$, the point $u\in J^2_x$ of the fiber is by definition a projective tangent vector at $x$.   Passing through $x$ in the direction defined by $u$ is a distinguished curve of the differential equation.  This curve determines a tangent direction at every point which therefore specifies a lift to $J^2$.  The vector $V_{(x,u)}$ is the tangent direction to the lifted curve at the point $(x,u)\in J^2$.

\subsection{Conformal structure}\label{ConformalStructure}
It is possible to construct from these data a degenerate conformal Lorentzian metric $g$ on $J^2$.  The degenerate direction for the metric is $V$, and the vector field $X$ is null with respect to the metric.  In terms of the coordinates $(x,y,p,q)$ on $J^2$, this metric is given by
\begin{equation}\label{Chernmetric}
g = 2[dy-p\,dx][dq-\tfrac13 F_q\,dp+K\,dy+(\tfrac13 qF_q-F-pK)dx]-[dp-q\,dx]^2
\end{equation}
where $K=\tfrac16 V(F_q)-\tfrac19 F_q^2-\tfrac12 F_p$. In Section \ref{Metric}, the present paper constructs this metric in a manifestly contact-invariant fashion.  The (locally defined) quotient space $\mathbb{M} = J^2/V$ is a three-manifold, but the metric $g$ may not be Lie derived up to scale along $V$, and so need not pass down to the quotient.  The Lie derivative $\mathscr{L}_Vg$ is proportional to $g$ if and only if the W\"{u}nschmann invariant of the original differential equation vanishes.  The W\"{u}nschmann invariant is given by
\begin{equation}\label{Wunschmann}
W=F_y +(V-\tfrac23 F_q)K.
\end{equation}
Vanishing of this invariant is a necessary and sufficient condition for $\mathbb{M}$ to possess an invariant Lorentzian conformal structure.

Going the other way, let $\mathbb{M}$ be a conformal Lorentzian $3$-manifold.  Define $\mathbb{S}$ to be the bundle over $\mathbb{M}$ with fiber $S^1$ that, at each point $P$, consists of all null directions in $\mathbb{P}T^*_P\mathbb{M}$.  The pullback metric is degenerate in the vertical direction, and so $\mathbb S$ supports the structure of a degenerate conformal Lorentzian $4$-manifold for which the degenerate direction is a conformal Killing symmetry.  There is a canonical symplectic potential $\psi$ defined on the total space of the cotangent bundle of $\mathbb M$.  The form $\psi$ is annihilated by the scaling in the fiber, and is Lie derived up to scale, and so descends to give a form $\theta$ up to scale on $\mathbb S$.  The condition $\theta\wedge d\theta\not=0$ follows since $\psi\wedge d\psi$ on $T^*\mathbb M$ does not vanish when pulled back to nonzero sections $\mathbb M\to T^*\mathbb M$.  Indeed, in local coordinates $\mathbf{x}=(x^1,x^2,x^3)$ on $\mathbf{M}$ with fiber coordinates $\mathbf{p} = (p_1,p_2,p_3)$ on $T^*\mathbf{M}$, $\psi\wedge d\psi = (\mathbf{p}\cdot d\mathbf{x})(d\mathbf{p}\cdot d\mathbf{x}) = -\frac{1}{2}(\mathbf{p}\times d\mathbf{p})\cdot (d\mathbf{x}\times d\mathbf{x})$.  Since $d\mathbf{x}\times d\mathbf{x}$ has three linearly independent components, and $\mathbf{p}\times d\mathbf{p}$ vanishes only on vectors parallel to the generator of scalings in $T^*\mathbb{M}$,  $\psi\wedge d\psi$ does not vanish when pulled back along any section of the projective cotangent bundle, and so {\em a fortiori} it does not vanish when pulled back to $\mathbb{S}$.

The vector field $X$ is the given by the null geodesic spray in $\mathbb{S}$.  To describe this, fix a metric $g$ in the conformal class on $\mathbb M$.  On $T^*\mathbb M$ the geodesic Hamiltonian is $H=\pi^*g^{-1}(\psi,\psi)$, which gives rise to the Hamiltonian vector field $\widehat{H}$, defined by $\widehat{H}\lrcorner d\psi = dH$.  An integral curve $\mu$ of $X$ projects to a geodesic of $M$, and the fiber component of $\mu$ is the covelocity of the geodesic.  The image of $\mathbb{S}$ under the map $\mathbb{S}\to T^*M$ is the null cone at every point of $\mathbb M$.  This is everywhere tangent to the spray $\widehat{H}$, because a geodesic being initially null will always remain null.  Furthermore, $\widehat{H}$ scales quadratically in the cotangent bundle, and descends to a direction field on the projective cotangent bundle $\mathbb{P}T^*\mathbb{M}$.  This direction field is everywhere tangent to $\mathbb{S}$, and so restricts to a direction field $X$ on $\mathbb{S}$.

Passing to the (locally defined) quotient by $X$ gives the space of null geodesics---the twistor space $\mathbb{T}$.  It follows from the definition of $X$ and $\theta$ that $X\lrcorner\theta=0$ and $X\lrcorner d\theta=0$.  
The second assertion follows by pulling back $X\lrcorner d\psi = dH$ to the null cone, and using the fact that $H$ vanishes identically there.  Thus $\theta$ descends to a contact structure on the twistor space.  The fibers of $\mathbb{S}\to\mathbb{M}$ project to the distinguished curves of the twistor space.

To prove that these distinguished curves are suitably generic, we verify that $\mathbb{S}$ admits a structure satisfying the conditions of Lemma \ref{J2characterization}.  Let $T^1$ be the bundle spanned by the null geodesic spray $X$ and $T^3$ be annihilator of $\theta$.  From $\theta\wedge d\theta\not=0$, it follows that $T^3$ is not Frobenius integrable, and so $[\Gamma(T^3),\Gamma(T^3)]=\Gamma(T\mathbb{S})$.  Moreover, since $\theta$ is annihilated by $X$ and Lie derived along it, $[\Gamma(T^1),\Gamma(T^3)]=\Gamma(T^3)$.

It remains to identify $T^2$ and to show that  $[\Gamma(T^1),\Gamma(T^2)]=\Gamma(T^3)$.  Let $V$ be a nonvanishing vertical vector field for $\mathbb{S}\to\mathbb{M}$ and let $T^2=\operatorname{span}\{ X, V\}$.  One such vector field $V$ can be given in terms of the angular momentum operator $\mathbf{L}=\mathbf{p}\times \partial/\partial\mathbf{p}$.  Then $\mathbf{L}$ is tangent to the null cone, because it annihilates $H$.  On homogeneous functions of degree 0, the angular momentum factors through a scalar operator $\mathbf{L}(f) = V(f)\mathbf{p}$, which defines the vector field $V$.   Because $\theta$ is horizontal, any such vector field, being vertical, annihilates $\theta$.   Furthermore, $[X,V]\lrcorner\theta = \mathscr{L}_X (V\lrcorner\theta)=0$ as well, so $[X,V]\in T^3$. It remains only to show that $X, V, [X,V]$ are linearly independent.  Each of these vector fields is at most first order in the metric, and therefore independence follows by a calculation in normal coordinates.  In these coordinates, $X=-\mathbf{p}\cdot\partial/\partial\mathbf{x}$ and so on the one hand
$$\left[X, \mathbf{L}\right] = \mathbf{p}\times\frac{\partial}{\partial\mathbf{x}}$$
and on the other hand, on the null cone this acts as $\mathbf{p}[X,V]$ on functions homogeneous of degree zero.  Finally,
$$d\theta\left(\left[X, \mathbf{L}\right],  \mathbf{L}\right) = (\mathbf{p}\cdot\mathbf{p}) g - \mathbf{p}\otimes\mathbf{p}$$
which vanishes nowhere on the null cone.  Thus $[\Gamma(T^1),\Gamma(T^2)]=\Gamma(T^3)$.  

So modulo the explicit construction of the degenerate metric and the assertion that specifically it is the W\"{u}nschmann invariant that governs whether the degenerate metric descends to the 3-manifold, we have proven the following theorem (proven in Fritelli, Kozameh, Newman \cite{FKN} by entirely different methods):

\begin{theorem}\label{conformalLorentzian}
There is a natural local equivalence between third-order differential equations under contact transformations with vanishing W\"{u}nschmann invariant and conformal Lorentzian 3-manifolds.
\end{theorem}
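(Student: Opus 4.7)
The excerpt has essentially supplied the backward direction of the equivalence: from a conformal Lorentzian 3-manifold $\mathbb{M}$, the bundle $\mathbb{S}$ of null directions carries the filtration of Lemma \ref{J2characterization}, hence is locally isomorphic to $J^2$, and the null geodesic spray $X$ produces a twistor quotient $\mathbb{T} = \mathbb{S}/X$ with a contact structure and a generic 3-parameter family of distinguished curves, yielding a third-order ODE under contact equivalence. My plan is therefore to (i) construct the invariant degenerate metric on $J^2$ associated to an arbitrary ODE and identify the W\"{u}nschmann invariant as the obstruction to its descent to the space of solutions, and then (ii) verify naturality of the correspondence between the two constructions.

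For step (i), I would build $g$ on $J^2$ using only the Lemma \ref{J2} filtration $T^1\subset T^2\subset T^3\subset TJ^2$ together with the total-derivative splitting line $\langle V\rangle \subset T^2$ determined by the equation. The associated graded pieces $T^2/T^1$, $T^3/T^2$, and $TJ^2/T^3$ are all line bundles, and iterated Lie brackets of $X\in T^1$ with $V$ furnish natural isomorphisms between them; writing $g$ as a sum of products of contact-invariant one-forms dual to these graded pieces (the detailed construction deferred to Section \ref{Metric}) reproduces formula (\ref{Chernmetric}) in local coordinates. To identify the W\"{u}nschmann invariant as the obstruction to descent, I would compute $\mathscr{L}_V g$ directly from (\ref{Chernmetric}): each of the three one-forms appearing there has a short Lie derivative formula along $V$, and collecting terms shows that the failure of $\mathscr{L}_V g$ to be proportional to $g$ is controlled by a single scalar, a nonzero multiple of $W = F_y + (V-\tfrac23 F_q)K$. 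When $W=0$, a suitable $V$-dependent conformal rescaling yields a $V$-invariant representative of $g$; since $V$ lies in $\ker g$, the rescaled metric is then $V$-horizontal and descends to a conformal Lorentzian metric on $\mathbb{M} = J^2/V$.

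The main obstacle is step (ii), naturality. Starting from $(\mathbb{M}, [g_{\mathbb{M}}])$, building $\mathbb{S}$ and the ODE on $\mathbb{T}$, and then applying the forward construction, one obtains a degenerate metric $\widetilde{g}$ on $J^2\cong\mathbb{S}$ that must be verified to be conformally the pullback of $g_{\mathbb{M}}$ along $\mathbb{S}\to\mathbb{M}$; the identification is pinned down by the fact that the fiber $\mathbb{S}_P$ parametrizes null directions at $P$, so the tangent map sends $T^2/T^1$ to a distinguished null line in $T_P\mathbb{M}$. Conversely, starting from an ODE with $W=0$, descending to $\mathbb{M}$, and forming the null-cone bundle $\mathbb{S}_{\mathbb{M}}$ of the descended metric, I would establish the identification $J^2\cong\mathbb{S}_{\mathbb{M}}$ by sending $(x,u)\in J^2$ to the null codirection determined by the restriction of $g$ to $T^2_{(x,u)}$. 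The critical check is that under this identification the line $\langle V\rangle$ corresponds to the null geodesic spray of $g_{\mathbb{M}}$; this amounts to a first-order comparison of flow equations using the Hamiltonian $H = g_{\mathbb{M}}^{-1}(\psi,\psi)$ together with the defining equations of $V$, and it is where the two constructions close up on each other.
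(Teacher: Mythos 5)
Your overall architecture matches the paper's. The backward direction is exactly the null-cone-bundle construction checked against Lemma \ref{J2characterization}, and the paper itself asserts the theorem ``modulo the explicit construction of the degenerate metric and the assertion that specifically it is the W\"{u}nschmann invariant that governs whether the degenerate metric descends,'' which is precisely what your step (i) supplies. Your route to that identification is more computational than the paper's: you propose to evaluate $\mathscr{L}_Vg$ directly from \eqref{Chernmetric}, which does work --- the part of $\mathscr{L}_Vg$ not proportional to $g$ is a multiple of $W\,(dy-p\,dx)^2$ --- whereas the paper obtains both the metric and the invariant geometrically in Section \ref{Metric}: the conformal class at a point of $J^2$ is the osculating conic of the curve $x\mapsto d_Pf(x;P)$, and $W$ appears as the fifth-order obstruction $(\phi^TA\phi)^{(5)}$, i.e.\ the projective curvature of the indicatrix. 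Be aware that your graded/bracket description of where \eqref{Chernmetric} comes from is only heuristic: the filtration and the splitting $\langle V\rangle$ fix the leading part $2[dy-p\,dx][dq-\cdots]-[dp-q\,dx]^2$ but not the lower-order normalization $K$, which is second order in $F$ and requires the osculating-conic (or an equivalent normalization) argument; since you defer this to Section \ref{Metric}, it is not by itself a gap.

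The genuine error is in step (ii): the total-derivative line $\langle V\rangle$ cannot correspond to the null geodesic spray of $g_{\mathbb{M}}$, and the ``critical check'' as you state it would fail. By construction $\mathbb{M}=J^2/V$, so under any identification $J^2\cong\mathbb{S}_{\mathbb{M}}$ covering the identity on $\mathbb{M}$, $V$ is forced to be the vertical direction of $\mathbb{S}_{\mathbb{M}}\to\mathbb{M}$; it spans $\ker g$ and pushes forward to zero in $T\mathbb{M}$, while the geodesic spray pushes forward to a nonzero null vector. In the dictionary between the two double fibrations, $J^2\to J^1$ corresponds to $\mathbb{S}\to\mathbb{T}$, so it is $X=\partial/\partial q$ --- the fiber direction of $J^2\to J^1$, which is null for $g$ --- that must be matched with the spray. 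The real content of the naturality check is therefore that the curves in $\mathbb{M}$ obtained by projecting the $\partial/\partial q$-fibers, i.e.\ the one-parameter families of solutions through a fixed point of $J^1$ (the curves $N_{PQ}$ of Section \ref{Incidence}), are null geodesics of the descended metric; the identification of $V$ with the vertical direction requires no check at all. With the roles of $X$ and $V$ corrected, your plan for (ii) is sound and in fact supplies a round-trip verification that the paper leaves implicit.
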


\section{Causal geometries on the space of solutions}\label{CausalStructures}
This section develops the natural geometric structure associated to the space of solutions $\mathbb{M}$ to a third-order equation.  This structure reduces to a conventional Lorentzian conformal structure if and only if the W\"{u}nschmann invariant vanishes.  To motivate this discussion, the previous section establishes the basic properties of the (possibly locally defined) double fibration
$$\xymatrix{
&\ar[dl]\mathbb{S}\ar[dr]&\\
\mathbb{T}&&\mathbb{M}
}$$

An individual fiber of the submersion $\mathbb{S}\to\mathbb{M}$ projects down to give a trajectory solving the differential equation in $\mathbb{T}$: this is a re-expression of the notion that $\mathbb{M}$ is a {\em space of solutions} of the differential equation.  The fibers for the other submersion $\mathbb{S}\to\mathbb{T}$ also project down to the space of solutions $\mathbb{M}$, although their precise meaning has heretofore not been identified in general.

When the W\"{u}nschmann invariant vanishes, $\mathbb{M}$ carries a natural conformal Lorentzian metric by Theorem \ref{conformalLorentzian}, and $\mathbb{S}$ is canonically identified with the null cone bundle associated to this metric.  The space $\mathbb{T}$ is then the twistor space: the quotient of $\mathbb{S}$ by the null geodesic spray.  The fibration $\mathbb{S}\to\mathbb{M}$ can be understood as the subbundle of the projective tangent bundle $\mathbb{P}T\mathbb{M}$ of null directions.  Under this correspondence, a null cone with vertex at $P\in \mathbb{M}$ corresponds to a one-parameter family of null geodesics, which in turn is identified with the trajectory defining the solution $P$.

This structure can be axiomatized in a manner that allows construction of the spaces $\mathbb{S}$ and $\mathbb{T}$, along with their natural contact structure.  The one parameter families of null geodesics starting at each point $P$ give rise to a cone in the tangent space $T_P\mathbb{M}$ with vertex at the origin.  Equivalently, such a cone is the affine cone over some curve in the projective space $\mathbb{P}T_P\mathbb{M}$.  Here locality considerations may mean that the cone may fail to close up completely, or the associated curve may have one or more singular points.  Henceforth, we shall work only near regular points of the curve.\footnote{We can localize near a point of $\mathbb{T}$ and a point of $\mathbb{M}$ simultaneously.  Thus all solutions can be assumed to be fully regular, but cannot necessarily be continued for all time.  The geometrical implication is that we may isolate a smooth part of the null cone at each point of $\mathbb{M}$, but the ``cone'' need not then close up.}

\subsection{Incidence relation and the indicatrix}\label{Incidence}
Two solutions $y_1$ and $y_2$ to the differential equation $y'''=F(x,y,y',y'')$ are said to be {\em incident} if, at some point $x$, one has
$$y_1(x)=y_2(x),\quad y_1'(x) = y_2'(x).$$
That is to say, two curves are incident if and only if the associated solution curves in the $J^1$ intersect.  In the latter interpretation, the incidence relation is manifestly contact-invariant.

The set of all solutions incident with a given solution $P\in\mathbb{M}$ cuts out a surface $N_P$ with a conical type singularity at the vertex $P$.  The generators of this cone can be described as follows.  If $Q$ is a solution incident with $P$, then the curve $N_{PQ}$ consisting of solutions $R$ incident with $P$ at the same point of $J^1$ as $Q$. (Our localization assumption implies that two distinct solutions are incident at most at a single point of $J^1$.)  The surface $N_P$ is thus ruled by the pencil of curves $N_{PQ}$ as $Q$ varies over solutions incident with $P$.

The incidence relation described here is called a {\em causal geometry}.  The following axioms are assumed to hold in a sufficiently small neighborhood of each point $P\in\mathbb{M}$:
\begin{enumerate}[{\bf {Axiom} 1.}]
\item $N_P\setminus\{P\}$ is a smooth embedded hypersurface in $\mathbb{M}$
\item The singular variety $N_P$ is ruled by a pencil of smooth curves $N_{PQ}$ from $P$ to $Q$, as $Q$ varies over $N_P\setminus\{P\}$.
\item For every $Q\in N_P\setminus\{P\}$, the curve $N_{PQ}$ on $N_P$ coincides with the curve $N_{QP}$ on $N_Q$: $N_{PQ}=N_{QP}$.
\item Let $C_P$ be the set of tangents at $P$ to the curves $N_{PQ}$ as $Q$ varies over $N_P\setminus\{P\}$.  Then $C_P$ is a regular (smooth) curve in the projective tangent space $\mathbb{P}T_P\mathbb{M}$, and does not make second order contact with any of its tangent lines.  (The curve $C_P$ is called the {\em indicatrix}.)
\item Let $Q,R\in N_P\setminus\{P\}$ be given distinct points which are mutually incident. Then the tangent plane to $N_P$ at $R$ is the same as the tangent plane to $N_Q$ at $R$: $T_RN_P=T_RN_Q$.  ({\em Envelope condition})
\end{enumerate}
Theorem \ref{causalTheorem} establishes that these axioms hold in the case of the causal geometry on the space of solutions of a third-order differential equation.  The first four axioms are fairly natural assumptions that make precise the notion that the $N_P$ should be a cone based at $P$.  The envelope condition of axiom 5, in terms of the bundle $J^1\to\mathbb{M}$, guarantees that the one-form $\theta$, whose annihilator is the 3-plane bundle that lifts the tangent planes through the vertices of the cones $N_P$, is Lie derived along the fibers of the fibration $J^2\to J^1$.

Causal geometries also appear naturally in connection with the following variational problem.  Let $L:T\mathbb{M}\setminus\{0\}\to\mathbb{R}$ be a homogeneous function of degree two (a {\em Lagrangian}), possibly only defined on an open conical subbundle of $T\mathbb{M}\setminus\{0\}$.  Suppose that for each $P\in\mathbb{M}$, the curve $C_P = \{v\in \mathbb{P}T_P\mathbb{M}\mid L(P,v) = 0\}$ is smooth.  For a curve $\gamma$ in $\mathbb{M}$, consider the energy functional
$$E[\gamma] = \frac{1}{2}\int_a^b L(\gamma(t),\gamma'(t))\,dt.$$
A null geodesic is a curve $\gamma$ that is an extremals for $E$ along which $L(\gamma(t),\gamma'(t))\equiv 0$.  The family of null geodesics for a Lagrangian defines a causal geometry if and only if the Lagrangian is regular at every point of the cone over $C_P$ in $T_PM$ for each $P$, in the sense that its vertical Hessian is nondegenerate in directions tangent to $C_P$.  In coordinates $x^i$ for $\mathbb{M}$ and fiber coordinates $\dot{x}^i$ for $T\mathbb{M}$, this is the condition that the $3\times 3$ matrix $\frac{\partial^2L}{\partial \dot{x}^i\partial \dot{x}^j}$ be nonsingular in directions tangent to the null cone.  This is also a sufficient condition for $L$ to be conserved along an extremal, and thus the null geodesics are precisely those extremals of the energy for which $L(\gamma(0),\gamma'(0))=0$.

The Euler--Lagrange system is a second order ordinary differential equation for the curve $\gamma(t)$.  By smooth dependence on initial conditions, for initial conditions lying on the hypersurface $L(P,\gamma'(0))=0$ sufficiently near the origin of the tangent space $T_P\mathbb{M}$, the null geodesics foliate a smooth hypersurface in $\mathbb{M}$, giving axioms 1--2.  Axiom 3 follows since the null geodesics are critical points of the energy under compactly supported variations, and so in particular are characterized independently of the direction of their parameterization.  Axiom 4 follows from the regularity of the Lagrangian.  Finally, for axiom 5, it is sufficient to show that, for any point $P$, and any variation $\gamma_s$ of null geodesics through $P$,
\begin{equation}\label{firstvariation}
\left.\frac{\partial L}{\partial \dot{x}^i} \frac{d\gamma^i}{ds}\right|_{s=0} = 0.
\end{equation}
This then establishes that the null cone at $P$ is tangent to the indicatrix curve at every point, which is equivalent to axiom 5.  By the Euler--Lagrange equations,
\begin{align*}
0=\frac{d}{ds}L(\gamma_s,\dot{\gamma}_s) &= \frac{\partial L}{\partial x^i}\frac{d \gamma^i}{ds} + \frac{\partial L}{\partial {\dot{x}^i}}\frac{d \dot{\gamma}^i}{ds}\\
&=\left(\frac{d}{dt}\frac{\partial L}{\partial \dot{x}^i}\right)\frac{d\gamma^i}{ds} +  \frac{\partial L}{\partial {\dot{x}^i}}\frac{d \dot{\gamma}^i}{ds}\\
&=\frac{d}{dt}\left(\frac{\partial L}{\partial \dot{x}^i}\frac{d\gamma^i}{ds}\right)
\end{align*}
So the left-hand side of \eqref{firstvariation} is constant along the curve $\gamma(t)$, as required.

Conversely, a Lagrangian can be associated to any causal geometry, up to a certain ambiguity.  Let $L : T\mathbb{M}\to\mathbb{R}$ be a function homogeneous of degree two that vanishes at every point of the cone over the indicatrix curve $C_P$ for every point $P\in\mathbb{M}$, and suppose further that the vertical Hessian of $L$ is nondegenerate.  It is always possible to select such a function, at least locally, and the ambiguity in selecting such a function is of the form
$$L(x,\dot{x}) \to \Omega(x,\dot{x})L(x,\dot{x})$$
where $\Omega$ is a nonvanishing function.  A causal geometry is already associated to such a Lagrangian, by the preceding argument, and so it is sufficient to show that the causal geometry induced by the Lagrangian agrees with the one already given.  To show that the curves $N_P$ coincide with the null geodesics of $L$, it is enough to show that they are critical points for the energy with respect to variations $\gamma_s$ tangent to the null cone, meaning that \eqref{firstvariation} holds.  Indeed, the Euler--Lagrange system for the constrained problem is
$$\frac{\partial L}{\partial x^i} - \frac{d}{dt}\frac{\partial L}{\partial\dot{x}^i} = \lambda\frac{\partial L}{\partial\dot{x}^i}$$
where $\lambda=\lambda(t)$ is a free function of $t$.  The freedom in $\lambda$ can be absorbed by a change in parameterization of the curve $\gamma(t)$.

Now consider the curve $\gamma = N_{PQ}$, with some parameterization.  Let $\gamma_s$ be a variation of $\gamma$ obtained by varying the endpoint $Q$ along $N_P$ (that is, $\gamma_s$ is a variation through null geodesics).  In the statement of Axiom 5, taking the point $R$ to approach $Q$, a necessary condition for $\gamma_s$ to be on the null cone is that \eqref{firstvariation} holds.  Moreover, also by Axiom 5, $\gamma_s$ satisfies $L(\gamma_s,\dot{\gamma}_s)\equiv 0$.  Therefore we have on the one hand
$$0=\frac{d}{ds}L(\gamma_s,\dot{\gamma}_s) = \frac{\partial L}{\partial x^i} \frac{d\gamma^i}{ds}+\frac{\partial L}{\partial\dot{x}^i}\frac{\dot{\gamma}^i}{ds}.$$
And on the other hand also
$$0 =\frac{d}{dt}\left(\frac{\partial L}{\partial\dot{x}^i}\frac{d\gamma^i}{ds}\right) =\left(\frac{d}{dt}\frac{\partial L}{\partial \dot{x}^i}\right)\frac{d\gamma^i}{ds} +  \frac{\partial L}{\partial {\dot{x}^i}}\frac{d \dot{\gamma}^i}{ds}.$$
Equating the two gives
$$\left(\frac{\partial L}{\partial x^i} - \frac{d}{dt}\frac{\partial L}{\partial \dot{x}^i}\right)\frac{d\gamma^i}{ds} = 0$$
along $N_{PQ}$.  Thus $N_{PQ}$ is a null geodesic for the Lagrangian $L$.


These considerations are summarized in the following theorem:
\begin{theorem}
Let $L:T\mathbb{M}\to\mathbb{R}$ be a homogeneous function of degree two, such that for each $P\in\mathbb{M}$, the curve $C_P = \{L=0\}\subset \mathbb{P}T_P\mathbb{M}$ is smooth.  Suppose also that the vertical Hessian of $L$ is nondegenerate at each point of $C_P$.  Then the null geodesic curves of $L$ form a causal geometry.  Every causal geometry arises (locally) in this manner.
\end{theorem}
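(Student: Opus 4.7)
The plan is to prove both directions of the equivalence by organizing the computations that have already been carried out in the body of the section, with the forward direction consisting of a verification of Axioms 1--5 and the converse hinging on a construction of $L$ together with an identification of its null geodesics with the given incidence curves $N_{PQ}$.

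For the forward direction, I would verify each axiom in turn. Axioms 1 and 2 follow from standard smooth dependence on initial conditions for the Euler--Lagrange system applied to initial data lying on the submanifold $\{L=0\}\subset T_P\mathbb{M}$; the surface $N_P$ is obtained as the union of these null extremals emanating from $P$, ruled by the one-parameter family they compose. Axiom 3 is automatic from the reparametrization invariance of the energy functional under compactly supported variations, so that $N_{PQ}=N_{QP}$ as unparametrized curves. Axiom 4 follows from the explicit hypotheses: smoothness of $C_P$ is assumed, and the nondegeneracy of the vertical Hessian on $C_P$ rules out second-order contact with a tangent line of the indicatrix (a flat direction of the Hessian would correspond precisely to such contact). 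The essential computation is for Axiom 5, and it has already been performed: differentiating $L(\gamma_s,\dot\gamma_s)\equiv 0$ along a variation through null extremals and using the Euler--Lagrange equations shows that $(\partial L/\partial \dot x^i)(d\gamma^i/ds)$ is constant in $t$; vanishing at the common vertex $P$ then forces \eqref{firstvariation} identically, which is the infinitesimal form of the envelope condition.

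For the converse, I would start by constructing a Lagrangian. Because $C_P$ is a smooth curve in $\mathbb{P}T_P\mathbb{M}$ varying smoothly in $P$, the affine cone over $C_P$ is locally the zero set of a homogeneous degree-two function $L$ on $T\mathbb{M}$, and one may arrange the vertical Hessian to be nondegenerate on $\{L=0\}$ by choosing a defining function adapted to a local trivialization of the indicatrix subbundle; any two such choices differ by $L\mapsto \Omega L$ with $\Omega$ nonvanishing, and such a rescaling does not alter the family of null extremals. The null geodesic causal geometry of $L$ then satisfies Axioms 1--5 by the forward direction, so the remaining task is to show that its null geodesics are precisely the given curves $N_{PQ}$. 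For this I would take a variation $\gamma_s$ of $\gamma=N_{PQ}$ obtained by sliding the endpoint $Q$ along $N_P$ (a variation through incident null curves) and compare the two expressions for $d L/ds$: one obtained from the chain rule directly, the other from $d/dt$ of $(\partial L/\partial\dot x^i)(d\gamma^i/ds)$, exactly as displayed in the body of the section. Axiom 5 supplies the vanishing of $(\partial L/\partial\dot x^i)(d\gamma^i/ds)$, and equating the two expressions yields the constrained Euler--Lagrange equation for $N_{PQ}$, with the Lagrange multiplier freedom absorbed into reparametrization.

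The main obstacle, and the only point that requires care beyond the calculations already carried out, is the constructive existence of a Lagrangian $L$ whose vertical Hessian is nondegenerate on the indicatrix. One needs to check that the ambiguity $L\mapsto\Omega L$ is truly the full ambiguity and that nondegeneracy of the Hessian transverse to the indicatrix can always be arranged, using Axiom 4 (no second-order contact with tangent lines). Once this is in hand, the identification of $N_{PQ}$ with the Euler--Lagrange extremals is forced, because the constrained Euler--Lagrange equation together with the initial direction at $P\in C_P$ determines a unique curve, and this curve is $N_{PQ}$ by construction.
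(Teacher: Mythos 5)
Your proposal is correct and follows essentially the same route as the paper: Axioms 1--2 from smooth dependence on initial conditions for the Euler--Lagrange flow restricted to null initial data, Axiom 3 from reparametrization invariance, Axiom 4 from regularity of the Lagrangian, and Axiom 5 from the conservation of $(\partial L/\partial\dot{x}^i)(d\gamma^i/ds)$ along a variation through null geodesics; the converse likewise matches the paper's argument of choosing a defining function $L$ (unique up to $\Omega L$) and equating the two expressions for $dL/ds$ along a variation sliding $Q$ within $N_P$ to recover the constrained Euler--Lagrange equation for $N_{PQ}$. Your explicit remark that the conserved quantity vanishes because the variation fixes the vertex $P$ is a small but welcome completion of a step the paper leaves implicit.
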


\subsection{The causal geometry associated to a differential equation}\label{Lagrangian}
At each point $P$ of $\mathbb{M}$, denote by
$$y=f(x;P)$$
the solution to the differential equation represented by $P$.  We break contact invariance in specifying the differential equation, but shall ultimately be concerned only with the contact-invariant information that can be extracted from $f$.  Locally, this solution depends smoothly on $x$ and $P$.  The underlying assumption under which $\mathbb{M}$ is a differentiable manifold smoothly parameterizing a space of solutions is that in local coordinates $P=(p_1,p_2,p_3)$ on $\mathbb{M}$, the Wronskian determinant
\begin{equation}\label{Wronskian}
\left|\begin{matrix}
\frac{\partial f}{\partial p_1} & \frac{\partial f}{\partial p_2}&\frac{\partial f}{\partial p_3}\\
&&\\
\frac{\partial f_x}{\partial p_1} & \frac{\partial f_x}{\partial p_2}&\frac{\partial f_x}{\partial p_3}\\
&&\\
\frac{\partial f_{xx}}{\partial p_1} & \frac{\partial f_{xx}}{\partial p_2}&\frac{\partial f_{xx}}{\partial p_3}
\end{matrix}
\right|\not= 0.
\end{equation}

To obtain a more concrete description of the cone $N_P$ through a particular solution $P\in\mathbb{M}$ and the associated curves $N_{PQ}$ that rule the cone defined in the previous section, suppose that $\gamma(t)$ is a parameterization of the curve $N_{PQ}$ such that $\gamma(0)=P$. Since all points along $\gamma(t)$ are incident with $P$ at the same point $(x,y,y')$, the following two equations must hold along $\gamma$:
\begin{equation}\label{NullCone}
\begin{aligned}
f(x;\gamma(t)) &= f(x;\gamma(0))\\
f_x(x;\gamma(t)) &= f_x(x;\gamma(0)).
\end{aligned}
\end{equation}
Under generic conditions, the second equation can be used to solve for $x$ in terms of $\gamma(0)$ and $\gamma(t)$, and then substituted into the first equation which may then be solved for the admissible values $\gamma(t)$.  Although ostensibly this system of equations is not invariant under contact transformations, by B\"{a}cklund's theorem a contact transformation will preserve the space of solutions $\gamma(t)$.  This is geometrically evident because the causal geometry itself is contact-invariant.

Solutions of these equations arise as first integrals of the differentiated forms
\begin{equation}\label{NullConeFlow}
\begin{aligned}
\frac{d}{dt} f(x;\gamma(t)) &= 0\\
\frac{d}{dt} f_x(x;\gamma(t)) &= 0
\end{aligned}
\end{equation}
with initial conditions $\gamma(0), \gamma'(0)$.  The initial conditions are not completely arbitrary.  Rather if $\gamma(0)$ is fixed, then $\gamma'(0)$ is constrained by the requirement that
\begin{equation}\label{curve}
\begin{aligned}
\left.\frac{d}{dt} f(x;\gamma(t))\right|_{t=0} &= 0\\
\left.\frac{d}{dt} f_x(x;\gamma(t))\right|_{t=0} &= 0
\end{aligned}
\end{equation}

The second equation of \eqref{curve} can be solved to obtain the parameter $x$ in terms of the initial conditions $\gamma(0)$, $\gamma'(0)$, by \eqref{Wronskian}.  The following Lagrangian is homogeneous of degree two in $\gamma'(0)$:
\begin{equation}\label{lagrangian}
L(\gamma(0),\gamma'(0)) =\left.\frac{d}{dt} f(x(\gamma(0),\gamma'(0));\gamma(t))\frac{d}{dt} f_{xx}(x(\gamma(0),\gamma'(0));\gamma(t))\right|_{t=0}
\end{equation}
The second factor ensures that the Lagrangian is regular (Lemma \ref{regularlagrangian}).  For fixed $\gamma(0)$, the values of the tangent $\gamma'(0)$ satisfying equation \eqref{curve} cut out an affine cone over a curve in the projective tangent space at $\gamma(0)$.  This cone is the ``null cone'' for the Lagrangian $L$, and it coincides with the tangent cone to $N_{\gamma(0)}$ at the vertex $\gamma(0)$.  As in Axiom 4, for $P\in\mathbb{M}$, denote by $C_P\subset \mathbb{P}T_P\mathbb{M}$ the curve cut out in the projective tangent space by the equation $L(P,v)=0$.

Although the Lagrangian $L$ is not itself contact-invariant, by the argument already given its locus of zeros is contact-invariant. Under contact transformations, the Lagrangian is determined up to rescaling by a nonvanishing function of $\gamma(0)$ and $\gamma'(0)$,
$$L(\gamma(0),\gamma'(0)) \to \Omega(\gamma(0),\gamma'(0)) L(\gamma(0),\gamma'(0)).$$
The curves $N_{PQ}$ that generate the causal cone $N_P$ are extremals for the energy functional
$$E[\gamma] = \frac{1}{2}\int_a^b L(\gamma(t),\gamma'(t))\,dt.$$

\begin{theorem}\label{causalTheorem}
A third-order differential equation determines a causal geometry satisfying Axioms 1--5.
\end{theorem}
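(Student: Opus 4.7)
The plan is to verify each of Axioms 1--5 directly, using the parametrization of the incidence cones afforded by the general solution $y=f(x;P)$ together with the Wronskian non-degeneracy~\eqref{Wronskian}. Each solution $P\in\mathbb{M}$ lifts to a curve $\sigma_P(x)=(x,f(x;P),f_x(x;P))$ in $J^1$, and $R$ is incident with $P$ exactly when $\sigma_R(\bar x)=\sigma_P(\bar x)$ for some $\bar x$. I define $\Psi_P\colon(x,q)\mapsto R$, where $R$ is the unique solution with initial data $(y,y',y'')(x)=(f(x;P),f_x(x;P),q)$; by smooth dependence on initial conditions this is smooth in $(x,q)$, has image $N_P$, and satisfies $R=P$ precisely when $q=f_{xx}(x;P)$.

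Axioms 1--3 are then routine. For Axiom 1, I would show that $d\Psi_P$ has rank $2$ off the diagonal $\{q=f_{xx}(x;P)\}$: the vectors $\partial_q\Psi_P$ and $\partial_x\Psi_P$ correspond respectively to deforming the initial data in the $y''$-direction and to sliding the incidence point along $\sigma_P$, and their linear independence is an immediate consequence of~\eqref{Wronskian}. Thus $N_P\setminus\{P\}$ is a smooth immersed hypersurface, embedded by the localization hypothesis. For Axiom 2, fixing $x_0$ and varying $q$ traces out the $1$-parameter family of solutions through the $J^1$-point $\sigma_P(x_0)$, which is precisely the ruling $N_{PQ}$ for any $Q=\Psi_P(x_0,q_0)$. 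Axiom 3 is immediate, since both $N_{PQ}$ and $N_{QP}$ coincide with the family of solutions through the common $J^1$-point $\sigma_P(x_0)=\sigma_Q(x_0)$.

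The bulk of the work will be Axiom 4, and here projective duality does the heavy lifting. The tangent direction $T(x)\in T_P\mathbb{M}$ to the ruling $N_{PQ(x)}$ at its vertex $P$ is obtained by differentiating $\sigma_{R(t)}(x)=\sigma_P(x)$ along the ruling, and is characterized up to scale as the unique direction annihilating both $\nabla_P f(x;P)$ and $\nabla_P f_x(x;P)$---two independent covectors by~\eqref{Wronskian}. Consequently the indicatrix $C_P=\{[T(x)]\}\subset\mathbb{P}T_P\mathbb{M}$ is the classical dual curve of the polar curve $\pi(x)=[\nabla_P f(x;P)]\in\mathbb{P}T^*_P\mathbb{M}$. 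I would then translate~\eqref{Wronskian} into the statement that $(\nabla_Pf,\nabla_Pf_x,\nabla_Pf_{xx})$ spans $T^*_P\mathbb{M}$ at every $x$---equivalently, that $\pi$ is regular and has no inflection point---and invoke the standard projective duality of plane curves in $\mathbb{P}^2$ (which exchanges smooth points with smooth points and cusps with flexes) to conclude that $C_P$ is regular and has no tangent line of second-order contact. The main obstacle is to carry out this duality argument cleanly in the absence of any natural bilinear form identifying $T_P\mathbb{M}$ with its dual.

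Finally, Axiom 5 reduces to an infinitesimal computation of $T_R N_P$. A first-order deformation $v\in T_R\mathbb{M}$ is tangent to $N_P$ at $R$ if and only if the deformed solution remains incident with $P$ at a nearby parameter $\bar x+\epsilon\,\delta x$; matching $f$ and $f_x$ to first order, and using $f_{xx}(\bar x;R)\neq f_{xx}(\bar x;P)$ for $R\neq P$ to solve for $\delta x$, yields $T_R N_P=\ker\nabla_R f(\bar x;R)$. Crucially, the right-hand side depends only on $R$ and on the incidence parameter $\bar x$, not on the specific solution $P$ being intersected. Hence whenever $Q,R$ are mutually incident via a common ruling of $N_P$---so that $P,Q,R$ meet at a single $J^1$-point with common parameter $\bar x$---the identity $T_R N_P=T_R N_Q$ is immediate, verifying the envelope condition.
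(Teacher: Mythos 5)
Your proposal is correct, and for Axioms 1, 2, 3 and 5 it runs essentially parallel to the paper's proof: the paper also verifies Axiom 1 by an implicit-function-theorem argument on the system $f(x;Q)=f(x;P)$, $f_x(x;Q)=f_x(x;P)$ (with the same key observation you use implicitly, namely that $f_{xx}(x;Q)\neq f_{xx}(x;P)$ for $Q\neq P$ by uniqueness of solutions --- note this is what makes your $\partial_x\Psi_P$ independent of $\partial_q\Psi_P$, so it is not \emph{only} a consequence of \eqref{Wronskian}), and it proves Axiom 5 by exactly your observation that $T_RN_P$ is the annihilator of $d_Rf(\bar x;R)$, which depends only on $R$ and the incidence parameter $\bar x$. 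Where you genuinely diverge is Axiom 4. The paper reduces it to regularity of the Lagrangian \eqref{lagrangian} and proves that by the explicit Hessian computation of Lemma \ref{regularlagrangian}, obtaining $\det\operatorname{Hess}L=-(\det[A\ B\ C])^3\neq 0$ with the Wronskian reappearing as $\det[A\ B\ C]$. You instead identify $C_P$ as the dual of the polar curve $x\mapsto[d_Pf(x;P)]$ (this identification is the paper's Proposition \ref{dualcurve}) and invoke biduality of plane curves. This works, and the ``main obstacle'' you worry about is not one: projective duality of curves in $\mathbb{P}V$ requires no bilinear form, since the dual curve lives in $\mathbb{P}V^*$ and the relevant statement is that if a lift $\Gamma$ of the polar curve satisfies $\det(\Gamma,\Gamma',\Gamma'')\neq 0$ --- which is literally \eqref{Wronskian} --- then the lift $\Gamma^*=\Gamma\times\Gamma'$ of the dual satisfies $\det(\Gamma^*,\Gamma^{*\prime},\Gamma^{*\prime\prime})=\det(\Gamma,\Gamma',\Gamma'')^2\neq 0$, i.e.\ the dual is regular and inflection-free, which is Axiom 4. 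Your route is shorter and more geometric; what it does not deliver, and what the paper's Hessian computation buys, is the regularity of the Lagrangian itself, which is needed separately (in Section \ref{Hamiltonian}) to define the Legendre transform and the Hamiltonian, so the computation of Lemma \ref{regularlagrangian} cannot be dispensed with in the paper even if Axiom 4 is proved your way.
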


\begin{proof}
Axiom 1 reflects our running localization assumption that the third-order differential equation under consideration is regular. In local coordinates on $\mathbb{M}$, the system of equations
\begin{equation}\label{Np}
f(x;q_1,q_2,q_3)=f(x;p_1,p_2,p_3),\quad f_x(x;q_1,q_2,q_3)=f_x(x;p_1,p_2,p_3)
\end{equation}
for unknowns $x,Q=(q_1,q_2,q_3)$ and fixed $P=(p_1,p_2,p_3)$ has Jacobian matrix
$$\begin{bmatrix}
f_x(x;q_1,q_2,q_3)-f_x(x;p_1,p_2,p_3) & \frac{\partial f}{\partial q_1}& \frac{\partial f}{\partial q_2}& \frac{\partial f}{\partial q_3}\\
&&&&\\
f_{xx}(x;q_1,q_2,q_3)-f_{xx}(x;p_1,p_2,p_3) & \frac{\partial f_x}{\partial q_1}& \frac{\partial f_x}{\partial q_2}& \frac{\partial f_x}{\partial q_3}
\end{bmatrix}.$$
The matrix always has rank two by \eqref{Wronskian}.  Moreover, at a solution $Q\not=P$ of the original system, the lower left-hand corner cannot be zero, by uniqueness of solutions.  Thus the first column, along with one of the remaining three columns must yield an invertible $2\times 2$ submatrix.  The implicit function theorem then implies that the solution is a smooth submanifold away from the vertex $Q=P$.

For axiom 2, $N_{PR}$ is the set of solutions $Q=(q_1,q_2,q_3)$ of \eqref{Np} for a given value of $x, P=(p_1,p_2,p_3)$.  Because \eqref{Np} has rank two in the $q_i$ variables, the space of solutions is a smoothly embedded curve.  These curves clearly cover $N_P$.  Furthermore, two distinct curves meet only at the vertex $P$, by uniqueness of solutions of differential equations.

Axiom 3 is obvious.  Axiom 4 is equivalent to the assertion that $L$ is a regular Lagrangian, which is proven in Lemma \ref{regularlagrangian} of the next section.

Finally, suppose without loss of generality that in the statement of axiom 5, $R$ is a point between $P$ and $Q$ on $N_{PQ}$.  Along the curve $N_{PQ}$ the value of the parameter $x$ is fixed, and $N_{PQ}$ itself consists of all points $R$ such that
\begin{align*}
f(x; R) &= f(x; P)\\
f_x(x; R) &= f_x(x; P).
\end{align*}
The tangent plane to $N_P$ at the point $R$ is the annihilator of $d_Rf(x,R)$.\footnote{Here and elsewhere, the notation $d_Rf(x;R)$ is the exterior derivative of $f$ with respect to the variable $R$ only.  Equivalently, it is the exterior derivative of $f$ modulo the relation $dx=0$.}  This is the same tangent plane as that obtained by interchanging the roles of $P$ and $Q$.
\end{proof}

\subsection{Associated Hamiltonian}\label{Hamiltonian}
As in the previous section, for $P\in\mathbb{M}$, let $y=f(x;P)$ denote the solution of the differential equation corresponding to $P$.  Once again, contact-invariance is broken, but ultimately we will only be concerned with contact-invariant information contained in the solution.  Let $d_Pf$ denote the exterior derivative of $f$ regarding $x$ as constant.  In local coordinates $(p_1,p_2,p_3)$ on $\mathbb{M}$,
$$d_Pf(x;p_1,p_2,p_3) = \frac{\partial f}{\partial p_1}dp_1+\frac{\partial f}{\partial p_2}dp_2+\frac{\partial f}{\partial p_3}dp_3.$$
Then $x\mapsto d_Pf(x;P)$ defines a curve in the cotangent space $T_P^*\mathbb{M}$.  Denote the associated projective curve by $\tilde{C}_P\subset \mathbb{P}T^*\mathbb{M}$.

This curve is linked to the curve $C_P$ cut out by the Lagrangian via the following construction. Let $V$ be a three-dimensional vector space and $C$ a smooth curve in the projective plane $\mathbb{P}V$.  The dual projective plane $\mathbb{P}V^*$ is naturally identified with the space of lines in $\mathbb{P}V$.  The dual curve $C^*$ is the curve in $\mathbb{P}V^*$ defined by locus of lines tangent to $C$.  Suppose that $C$ is a nondegenerate curve in $\mathbb{P}V$, with parameterization $t\mapsto \gamma(t)$.  At a point $\gamma(t)$ of $C$, the corresponding point of the dual curve is obtained by solving for $\gamma^*(t)\in \mathbb{P}V^*$ the equations
\begin{align*}
\langle \gamma^*(t), \gamma(t)\rangle &= 0 \\
\langle \gamma^*(t), \gamma'(t)\rangle &= 0.
\end{align*}
Properly speaking, to make sense of the second equation, it is necessary to choose a lift of $\gamma$ to a curve in $V$.  Modulo the first equation, the second equation does not depend on the choice of lift, and so there is no ambiguity in speaking of the solution of the system of equations.

\begin{proposition}\label{dualcurve}
$\tilde{C}_P \subset \mathbb{P}T^*_P\mathbb{M}$ and $C_P\subset \mathbb{P}T_P\mathbb{M}$ are mutually dual.
\end{proposition}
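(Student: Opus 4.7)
The plan is to compute the dual of $\tilde{C}_P$ directly from the construction given just above the proposition and to identify it with $C_P$; biduality of smooth nondegenerate plane curves then yields the reverse equality.

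Parameterize $\tilde{C}_P\subset\mathbb{P}T^*_P\mathbb{M}$ by $x\mapsto d_P f(x;P)$; lifted to $T^*_P\mathbb{M}$, its tangent at parameter $x$ is $\partial_x d_P f(x;P) = d_P f_x(x;P)$.  The Wronskian hypothesis \eqref{Wronskian} ensures that $d_P f(x;P)$ and $d_P f_x(x;P)$ are linearly independent for every $x$, so $\tilde{C}_P$ is an honest immersed curve and the dual curve construction applies.  Setting $V=T^*_P\mathbb{M}$ (so $V^*=T_P\mathbb{M}$), the point of $\tilde{C}_P^*\subset\mathbb{P}T_P\mathbb{M}$ at parameter $x$ is the unique direction $v$ satisfying
$$\langle d_P f(x;P), v\rangle = 0,\qquad \langle d_P f_x(x;P), v\rangle = 0.$$
Writing $v=\gamma'(0)$, these two pairings are exactly $\frac{d}{dt}f(x;\gamma(t))|_{t=0}$ and $\frac{d}{dt}f_x(x;\gamma(t))|_{t=0}$, so the conditions coincide with the defining equations \eqref{curve} of the indicatrix $C_P$, with $x$ playing the role of the incidence parameter.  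Hence $\tilde{C}_P^* = C_P$ as parameterized curves in $\mathbb{P}T_P\mathbb{M}$.

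The reverse equality $C_P^* = \tilde{C}_P$ then follows from biduality of smooth nondegenerate plane curves, provided both curves are nondegenerate in the sense of having no second-order contact with any tangent line.  For $C_P$ this is exactly Axiom 4; for $\tilde{C}_P$ it reduces to the linear independence of $d_P f$, $d_P f_x$, $d_P f_{xx}$, which is once again the Wronskian condition \eqref{Wronskian}.  The main obstacle here is bookkeeping rather than any deep difficulty: one must separate carefully the role of $x$ (simultaneously the common parameter along both dual curves and the incidence parameter between $P$ and a nearby solution) from the role of $P$ (the base point with respect to which the exterior derivative $d_P$ is taken, and at which the pairing of $T_P\mathbb{M}$ with $T^*_P\mathbb{M}$ is evaluated).
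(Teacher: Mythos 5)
Your proof is correct and follows essentially the same route as the paper: identify the tangent to $\tilde{C}_P$ at parameter $x$ with $d_Pf_x(x;P)$ and observe that the two annihilation conditions defining the dual curve are precisely the equations \eqref{curve} cutting out $C_P$. Your explicit treatment of the reverse inclusion via biduality, with nondegeneracy supplied by Axiom 4 and the Wronskian condition \eqref{Wronskian}, is a point the paper leaves implicit, but the argument is the same in substance.
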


\begin{proof}
The curve $\tilde{C}_P$ is characterized as the image of the map
$$x\mapsto d_Pf(x;P).$$
The dual curve to $\tilde{C}_P$ is defined by the equations
\begin{equation}\label{CPequations}
\begin{aligned}
\langle \gamma^*(x), d_Pf(x;P)\rangle &= 0 \\
\langle \gamma^*(x), d_Pf_x(x;P)\rangle &= 0.
\end{aligned}
\end{equation}
But these two equations are identical with the equations that characterize $C_P$.
\end{proof}

Alternatively, choose local coordinates at $P$ and linearize the differential equation at the solution defined by $P$.  Then
$$f(x;p_1,p_2,p_3)=\phi_1(x)p_1+\phi_2(x)p_2+\phi_3(x)p_3$$
and so
$$d_Pf =\phi_1(x)dp_1+\phi_2(x)dp_2+\phi_3(x)dp_3.$$
The incidence relation between $f(x;p_1,p_2,p_3)$ and a nearby solution $f(x;p_1+dp_1,p_2+dp_2,p_3+dp_3)$ is then precisely
\begin{align*}
\phi_1(x)dp_1 + \phi_2(x)dp_2 + \phi_3(x)dp_3 &=0\\
\phi'_1(x)dp_1 + \phi'_2(x)dp_2 + \phi'_3(x)dp_3 &=0
\end{align*}
but these are the same equations that characterize the dual curve of $\tilde{C}_P$.

\begin{lemma}\label{regularlagrangian}
Let $L:T\mathbb{M}\to\mathbb{R}$ be a Lagrangian \eqref{lagrangian}.  Then $L$ is regular in a neighborhood of each point of $C_P$.
\end{lemma}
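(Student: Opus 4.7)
My plan is to verify regularity by a direct computation of the vertical Hessian of $L$ in linearizing coordinates, and then to identify its kernel when restricted to the tangent plane of the null cone at a point of $C_P$.

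First, I would use the linearized description of the solution appearing just after Proposition~\ref{dualcurve}: in suitable local coordinates $f(x;p) = \phi_i(x)\,p^i$ with $\phi_i(x) = \partial f/\partial p_i(x;P)$, the Wronskian condition \eqref{Wronskian} becomes the statement that the three covectors $\phi(x), \phi'(x), \phi''(x) \in T_P^*\mathbb{M}$ are linearly independent for every $x$. The Lagrangian \eqref{lagrangian} then factorizes as $L(v) = a(v)\,b(v)$, where
$$a(v) = \phi_i(x(v))\,v^i,\qquad b(v) = \phi_i''(x(v))\,v^i,$$
and $x = x(v)$ is determined implicitly by the constraint $\phi_i'(x)\,v^i = 0$. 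The indicatrix $C_P$ is the projectivization of $\{a=0\}$, and the Wronskian immediately forces $b \ne 0$ on this cone (otherwise $v$ would annihilate $\phi, \phi', \phi''$ simultaneously and so vanish).

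Second, I would compute the Hessian by implicit differentiation. The constraint gives $\partial x/\partial v^j = -\phi_j'(x)/b$, from which $\partial a/\partial v^j = \phi_j(x)$ (the implicit $x$-contribution is killed by the constraint) and $\partial b/\partial v^j = \phi_j''(x) - (c/b)\phi_j'(x)$, where $c = \phi_i'''\,v^i$. Differentiating $L = ab$ once more and restricting to $a = 0$ yields the closed form
$$H_{jk} = -\phi_j'\phi_k' + \phi_j\phi_k'' + \phi_k\phi_j'' - \frac{c}{b}\bigl(\phi_j\phi_k' + \phi_k\phi_j'\bigr)$$
for the vertical Hessian at a point of $C_P$. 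The crucial structural point that renders $H$ smooth on $C_P$ despite the singular-looking $1/b$ factor is that this factor, coming from $\partial x/\partial v^j$, is exactly compensated by the prefactor $b$ in $L = ab$.

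The crux is to identify the kernel of $H$ restricted to the tangent plane of the null cone at $v$, namely the two-dimensional subspace $\{u \in T_P\mathbb{M} : \phi_i u^i = 0\}$, which contains $v$. By Euler's identity for the degree-two homogeneous function $L$, one has $Hv = dL$; since $dL$ is conormal to the null cone, $v$ automatically lies in the kernel of the restriction, as it must by projective invariance. For any $u$ in this plane independent of $v$, the formula for $H_{jk}$ collapses under $\phi_i u^i = 0$ to
$$H(u,u) = -\bigl(\phi_i'(x)\,u^i\bigr)^2.$$
By the Wronskian, the simultaneous annihilator of $\phi_i$ and $\phi_i'$ is one-dimensional and spanned by $v$, so $\phi_i' u^i \ne 0$, and hence $H(u,u) \ne 0$. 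Therefore $H$ descends to a nondegenerate bilinear form on the line $T_{[v]}C_P$, proving regularity of $L$ at each point of $C_P$; the neighborhood statement then follows by continuity.

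The principal obstacle is the bookkeeping of implicit $x$-derivatives, and the key that tames it is the identity $\partial a/\partial v^j = \phi_j$, in which the leading $x$-derivative is absorbed precisely by the constraint defining $x(v)$. A pleasant side effect of the final identity $H(u,u) = -(\phi_i' u^i)^2$ is that it exhibits regularity of $L$ as exactly equivalent to the Wronskian condition \eqref{Wronskian}, i.e., to the standing genericity hypothesis that $\mathbb{M}$ parameterizes an honest three-parameter family of solutions.
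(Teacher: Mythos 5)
Your computation is correct and coincides with the paper's own up to and including the key formula for the vertical Hessian: the paper also linearizes, derives $\partial x/\partial\dot q_i=-f_{xi}/(\dot q^kf_{xxk})$, and lands on exactly your expression, which it writes as $\operatorname{Hess}L=-AA^T+BC^T+CB^T$ with $A=\phi'$, $B=\phi$, $C=\phi''-(c/b)\phi'$. Where you genuinely diverge is the endgame. The paper notes that $A,B,C$ are linearly independent by \eqref{Wronskian} and concludes that $\det\operatorname{Hess}L$ is a nonzero multiple of a power of $\det[A\ B\ C]$, so the full $3\times3$ Hessian is nonsingular. You instead restrict to the tangent plane $W=\{\phi_iu^i=0\}$ of the null cone, use Euler's identity to see that $v$ spans the radical of $H|_W$, and compute the induced form on $W/\langle v\rangle\cong T_{[v]}C_P$ to be $-(\phi_i'u^i)^2\neq0$; this has the virtue of exhibiting explicitly the (negative definite) conformal class induced on the indicatrix, and of tying nondegeneracy directly to the Wronskian. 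The one caveat is that your stated conclusion is formally weaker than what the lemma is later asked to deliver: in Section \ref{Hamiltonian} it is used to invert the Legendre transformation $\mathscr{L}=d^\vee L$, which requires the full $3\times3$ Hessian to be invertible, not only the induced form on $T_{[v]}C_P$. Your two facts already give this --- $H(v,\cdot)=b\,\phi\neq0$ shows any kernel vector must lie in $W$, and your computation of $H|_W$ shows its radical is exactly $\langle v\rangle$, on which $H$ does not vanish, so the kernel is trivial --- but you should add that short linear-algebra step to close the argument.
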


Indeed, it is sufficient to show that the Hessian matrix $\partial^2 L/\partial \dot{q}_i\partial \dot{q}_j$ is nonsingular at each point of $C_P$.  The Lagrangian is defined by
\begin{equation}\label{lagrangian}
L(q,\dot{q}) = \left(\dot{q}_i\frac{\partial f}{\partial q_i}(x(q,\dot{q}),q)\right)\left(\dot{q}_j\frac{\partial f_{xx}}{\partial q_j}(x(q,\dot{q}),q)\right).
\end{equation}
The function $x(q,\dot{q})$ is defined by
\begin{equation}\label{xqq}
\dot{q}_i \frac{\partial f_x}{\partial q_i}(x(q,\dot{q}),q) = 0.
\end{equation}
In particular, by implicit differentiation,
\begin{equation}\label{implicitx}
\frac{\partial x}{\partial \dot{q}_i} = -\frac{\partial f_x/\partial q_i}{\dot{q}_k\partial f_{xx}/\partial q_k}.
\end{equation}
At a point of $C_P$, in addition the following holds:
\begin{equation}\label{xCP}
\dot{q}_i \frac{\partial f}{\partial q_i}(x(q,\dot{q}),q) = 0.
\end{equation}

The Hessian at a point of $C_P$ is computed by differentiating \eqref{lagrangian}, imposing \eqref{xqq} and \eqref{xCP} along the way, and then finally substituting \eqref{implicitx}. In detail, denote $f_i=\partial f/\partial q_i$, $f_{xi}=\partial f_x/\partial q_i$, etc., and $x_i=\partial x/\partial \dot{q}_i$.  Then
\begin{align*}
L &= \dot{q}^k f_k \dot{q}^\ell f_{xx\ell}\\ 
L_i &= f_i \dot{q}^\ell f_{xx\ell} + \dot{q}^k f_k f_{xxi} + \dot{q}^k f_{xk} x_i \dot{q}^\ell f_{xx\ell} + \dot{q}^kf_k\dot{q}^\ell f_{xxx\ell}x_i\\
L_{ij} &= f_{xi}x_j\dot{q}^\ell f_{xx\ell} + f_i f_{xxj} + f_i\dot{q}^\ell f_{xxx\ell}x_j + f_j f_{xxi} +\\
&\qquad f_{xj}x_i\dot{q}^\ell f_{xx\ell} + \dot{q}^kf_{xxk}x_ix_j\dot{q}^\ell f_{xx\ell} + f_j\dot{q}^\ell f_{xxx\ell}x_i \qquad\pmod{\eqref{xqq}, \eqref{xCP}}\\
&= -f_{xi}f_{xj} + f_i f_{xxj} + f_{xxi} f_j - f_i f_{xj}\left(\frac{\dot{q}^\ell f_{xxx\ell}}{\dot{q}^kf_{xxk}}\right)  - f_{xi}f_j\left(\frac{\dot{q}^\ell f_{xxx\ell}}{\dot{q}^kf_{xxk}}\right)
\end{align*}
by \eqref{implicitx}.  Thus the Hessian matrix of $L$ has the form
$$\operatorname{Hess} L = -AA^T + BC^T +CB^T$$
where the column vectors $A,B,C$ are defined by
\begin{align*}
A_i &= f_{xi}\\
B_i &= f_i\\
C_i &= f_{xxi} - \frac{\dot{q}^\ell f_{xxx\ell}}{\dot{q}^kf_{xxk}}f_{xi}.
\end{align*}
By the hypothesis \eqref{Wronskian}, $A,B,C$ are linearly independent, and so
$$\det \operatorname{Hess} L = -(\det [A\ B\ C])^3\not=0,$$
which establishes the lemma.

The Legendre transformation associated to the Lagrangian $L$ is a function $\mathscr{L} : T\mathbb{M}\to T^*\mathbb{M}$ covering the projection onto $M$.  Over a point $P\in\mathbb{M}$, $L:T_P\mathbb{M}\to\mathbb{R}$, and $\mathscr{L} = d^\vee L$, the vertical exterior derivative of $L$.  The Hamiltonian associated to the degree $2$ homogeneous Lagrangian $L$ is defined by $H = L\circ \mathscr{L}^{-1}$.  Here $\mathscr{L}^{-1}$ is the inverse, possibly only locally defined near points of $C_P$, of the function $\mathscr{L} : T_PM\to T_P^*M$.

By Lemma \ref{regularlagrangian}, $H$ is well-defined in a neighborhood of the preimage of $C_P$ under $\mathscr{L}$.  The Hamiltonian, where it is defined, vanishes precisely on the dual curve $\tilde{C}_P$.  Indeed, in local coordinates,
\begin{align*}
\mathscr{L}_i &= \frac{\partial L}{\partial \dot{q}^i} = f_i\dot{q}^\ell f_{xx\ell}+\dot{q}^kf_kf_{xxi} + \dot{q}^kf_{xk}x_i\dot{q}^\ell f_{xx\ell}+\dot{q}^kf_k\dot{q}^\ell f_{xxx\ell}\\
&=f_i\dot{q}^\ell f_{xx\ell}
\end{align*}
when evaluated at any point of $C_P$.  Thus $\mathscr{L}$ is proportional to $d_p f$ at each point of $C_P$.  Inverting, we conclude that $L\circ\mathscr{L}^{-1}(d_Pf)=0$, so $H$ vanishes along $\tilde{C}_P$.

The following alternative construction of the Hamiltonian also applies, by linearizing the problem at $P$.  In local coordinates at $P$, $\phi_i(x) = \frac{\partial f}{\partial p_i}$, $i=1,2,3$ define independent solutions of the linearized ordinary differential equation.  In terms of these three solutions, the linearized equation itself can be recovered by solving the $3\times 3$ system for the unknown coefficients $h_i$:
\begin{equation}\label{linearizedode}
\phi_i(x) h_0(x) + \phi_i'(x) h_1(x) + \phi_i''(x) h_2(x) = \phi_i'''(x),\quad i=1,2,3.
\end{equation}
Cramer's rule gives
$$h_0(x) = \frac{
\left|\begin{matrix}
\phi_1''' & \phi_1' & \phi_1''\\
\phi_2''' & \phi_2' & \phi_2''\\
\phi_3''' & \phi_3' & \phi_3''
\end{matrix}\right|
}{
\left|\begin{matrix}
\phi_1 & \phi_1' & \phi_1''\\
\phi_2 & \phi_2' & \phi_2''\\
\phi_3 & \phi_3' & \phi_3''
\end{matrix}\right|
},\qquad 
h_1(x) = \frac{
\left|\begin{matrix}
\phi_1 & \phi_1''' & \phi_1''\\
\phi_2 & \phi_2''' & \phi_2''\\
\phi_3 & \phi_3''' & \phi_3''
\end{matrix}\right|
}{
\left|\begin{matrix}
\phi_1 & \phi_1' & \phi_1''\\
\phi_2 & \phi_2' & \phi_2''\\
\phi_3 & \phi_3' & \phi_3''
\end{matrix}\right|
}
$$

$$h_2(x) = \frac{
\left|\begin{matrix}
\phi_1 & \phi_1' & \phi_1'''\\
\phi_2 & \phi_2' & \phi_2'''\\
\phi_3 & \phi_3' & \phi_3'''
\end{matrix}\right|
}{
\left|\begin{matrix}
\phi_1 & \phi_1' & \phi_1''\\
\phi_2 & \phi_2' & \phi_2''\\
\phi_3 & \phi_3' & \phi_3''
\end{matrix}\right|
}.$$

The Lagrangian of the original equation localized at the point $P$ is equal to the Lagrangian of the linearized equation.  It is given first by solving
$$\phi_1'(x) q_1 + \phi_2'(x) q_2 + \phi_3'(x) q_3 = 0$$
for $x$ as a function of $q_1,q_2,q_3$.  In that case,
$$L(q) = \left(\phi_1(x(q)) q_1 + \phi_2(x(q)) q_2 + \phi_3(x(q)) q_3 \right)\left(\phi_1''(x(q)) q_1 + \phi_2''(x(q)) q_2 + \phi_3''(x(q)) q_3 \right).$$

The associated Hamiltonian is obtained by the same construction, but applied to solutions $\tilde{\phi}_1, \tilde{\phi}_2, \tilde{\phi}_3$ of the adjoint equation to \eqref{linearizedode}:
$$y(x) h_0(x) - (y(x)h_1(x))' + (y(x)h_2(x))'' = - y'''(x).$$

The following theorem is due to Wilczynski \cite{Wilczynski}; cf. also Olver \cite{Olver}:
\begin{theorem}
The projective curves $x\mapsto \phi_1(x)\,\partial/\partial q_1 + \phi_2(x)\,\partial/\partial q_2 + \phi_3(x)\,\partial/\partial q_3$ in $\mathbb{P}T_P\mathbb{M}$ and $x\mapsto \tilde{\phi}_1(x)\,dq_1+\tilde{\phi}_2(x)\,dq_2+\tilde{\phi}_3(x)\,dq_3$ are mutually dual.
\end{theorem}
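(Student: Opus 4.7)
Mutual duality of the curves $\gamma(x)=[\phi_1(x):\phi_2(x):\phi_3(x)]$ in $\mathbb{P}T_P\mathbb{M}$ and $\gamma^*(x)=[\tilde\phi_1(x):\tilde\phi_2(x):\tilde\phi_3(x)]$ in $\mathbb{P}T^*_P\mathbb{M}$ reduces, exactly as in the proof of Proposition \ref{dualcurve}, to the pointwise identities
\begin{equation*}
\sum_{i=1}^3\phi_i(x)\,\tilde\phi_i(x)=0,\qquad \sum_{i=1}^3\phi_i'(x)\,\tilde\phi_i(x)=0
\end{equation*}
holding identically in $x$. These linear constraints determine $(\tilde\phi_1,\tilde\phi_2,\tilde\phi_3)$ at each $x$ up to an overall scalar, which is projectively invisible. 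A canonical candidate is obtained by taking $\tilde\phi_i(x)$ to be the $(i,3)$-entry of the inverse of the Wronskian matrix $M(x)$ of $\phi_1,\phi_2,\phi_3$. The first two rows of $M\cdot M^{-1}=I$ are precisely the two duality identities above, and the third row supplies the normalization $\sum_{i=1}^3\phi_i''(x)\tilde\phi_i(x)=1$.

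To finish the proof it suffices to show that these $\tilde\phi_i$ satisfy the adjoint equation $L^* y := y'''+(h_2 y)''-(h_1 y)'+h_0 y=0$, for then any three linearly independent solutions of $L^*$ differ from $(\tilde\phi_1,\tilde\phi_2,\tilde\phi_3)$ by an invertible linear change of basis, which is a projective transformation of $\gamma^*$ preserving its duality with $\gamma$. The cleanest route is to show that
\begin{equation*}
\sum_{i=1}^3\phi_i^{(k)}(x)\,(L^*\tilde\phi_i)(x)=0,\qquad k=0,1,2.
\end{equation*}
Since the $3\times 3$ matrix $\bigl(\phi_i^{(k)}\bigr)_{k,i}$ is invertible by the Wronskian hypothesis \eqref{Wronskian}, these three scalar identities force $L^*\tilde\phi_i\equiv 0$ for each $i$.

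The verification proceeds by repeatedly differentiating the three normalized relations $\sum\phi_i\tilde\phi_i=0$, $\sum\phi_i'\tilde\phi_i=0$, $\sum\phi_i''\tilde\phi_i=1$, substituting $\phi_i'''=h_0\phi_i+h_1\phi_i'+h_2\phi_i''$ from \eqref{linearizedode} whenever a third derivative of a $\phi_i$ appears. This produces a cascade of bilinear identities expressing $\sum\phi_i^{(k)}\tilde\phi_i^{(\ell)}$ for small $k,\ell$ in terms of $h_0,h_1,h_2$ and their derivatives (for instance, $\sum\phi_i'\tilde\phi_i'=-1$ and $\sum\phi_i''\tilde\phi_i'=-h_2$ drop out immediately, and $\sum\phi_i\tilde\phi_i'''=-h_2$, $\sum\phi_i'\tilde\phi_i'''=2h_2'-h_1-h_2^2$ after two more rounds). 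Assembling these into the expansion
\begin{equation*}
L^*\tilde\phi=\tilde\phi'''+h_2\tilde\phi''+(2h_2'-h_1)\tilde\phi'+(h_2''-h_1'+h_0)\tilde\phi
\end{equation*}
and pairing with $\phi_i^{(k)}$ for $k=0,1,2$, the three resulting sums telescope to zero after cancellation of the various $h_ih_j$, $h_i^2$, and $h_i'$ terms.

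The main obstacle is the bookkeeping in this last step: although no individual manipulation is deep, the $k=2$ pairing involves on the order of a dozen terms that must be collected by type and checked to cancel in pairs. The clean final outcome---every $L^*\tilde\phi_i$ vanishing identically---reflects Wilczynski's classical observation that projective duality of nondegenerate curves in $\mathbb{P}^2$ and formal adjunction of linear third-order scalar ODEs are two manifestations of the same underlying algebraic structure, which is exactly the content of the theorem.
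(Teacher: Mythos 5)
Your proof is correct, and I have checked the computation: with $S_{k\ell}=\sum_i\phi_i^{(k)}\tilde\phi_i^{(\ell)}$ one gets $S_{00}=S_{10}=S_{01}=0$, $S_{20}=S_{02}=1$, $S_{11}=-1$, $S_{21}=-h_2$, $S_{12}=h_2$, $S_{22}=h_1+h_2^2-h_2'$, $S_{03}=-h_2$, $S_{13}=2h_2'-h_1-h_2^2$, $S_{23}=h_1'-h_2''+3h_2h_2'-h_0-2h_1h_2-h_2^3$, and the three pairings $\sum_i\phi_i^{(k)}L^*\tilde\phi_i$ for $k=0,1,2$ do all collapse to zero, so $L^*\tilde\phi_i\equiv 0$ by invertibility of the Wronskian matrix. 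Note, however, that the paper offers no proof of this statement at all: it is quoted as a classical theorem of Wilczynski (cf.\ Olver), so there is no argument of the authors' to compare against; what you have written is a self-contained verification of the cited result, built on the same duality criterion as Proposition \ref{dualcurve} (the two incidence relations $\sum\phi_i\tilde\phi_i=0$, $\sum\phi_i'\tilde\phi_i=0$), which is a genuinely useful supplement. Two small points. First, for \emph{mutual} duality you should also record $\sum\phi_i\tilde\phi_i'=0$ (which you already have, as the derivative of $\sum\phi_i\tilde\phi_i=0$ minus $\sum\phi_i'\tilde\phi_i=0$); together with $\sum\phi_i\tilde\phi_i=0$ this shows $\gamma$ is in turn the dual of $\gamma^*$. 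Second, your closing remark that an arbitrary basis of solutions of $L^*$ ``differs by an invertible linear change of basis, which is a projective transformation of $\gamma^*$ preserving its duality with $\gamma$'' is too strong: a general element of $GL_3$ applied to the components of $\gamma^*$ yields a projectively equivalent curve but destroys the duality pairing with the fixed curve $\gamma$ unless it is the inverse transpose of a transformation applied simultaneously to the $\phi_i$. The honest statement is that your construction singles out the canonical dual basis of adjoint solutions, and that this basis (not an arbitrary one) traces the dual curve; that is exactly what the theorem needs, so the proof stands once that sentence is tightened.
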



\section{Hamiltonian spray}
As the point $P$ varies, the dual curve $\tilde{C}_P$ cut out by the Hamiltonian defines a subfibration of the projective cotangent bundle $\mathbb{P}T^*\mathbb{M}$.  The four-manifold defined by the total space of this fibration is denoted here by $\mathbb{S}$, and the cone over $\mathbb{S}$ in the cotangent bundle $T^*\mathbb{M}$ is denoted by $\tilde{\mathbb{S}}$.  The canonical one-form $\theta$ on $T^*\mathbb{M}$ is homogeneous of degree one, and pulls back to a natural one-form defined up to scale on $\tilde{\mathbb{S}}$.

For a fixed choice of Hamiltonian homogeneous of degree two, define the Hamiltonian spray on $T^*\mathbb{M}$ as the unique vector field $X$ such that
$$X\lrcorner d\theta = dH.$$
The Hamiltonian spray is invariant up to scale under rescalings of $H$.  Moreover, it is tangent to the variety $\tilde{\mathbb{S}}$ cut out by $H=0$ since $X\lrcorner dH = 0.$  The vector field $X$ is homogeneous of degree one: if $\mu_t : T^*\mathbb{M}\to T^*\mathbb{M}$ denotes the dilation mapping in the fibers, then $X_{t\alpha} = t(\mu_t)_*X_\alpha.$

The Hamiltonian spray will now be used to define a filtration on $T\mathbb{S}$ in a manner analogous to the proof of Theorem \ref{conformalLorentzian}.  Let $V$ be a nonvanishing vector field that is vertical for the fibration $\mathbb{S}\to\mathbb{M}$.  Let $T^1\subset T\mathbb{S}$ be the subbundle spanned by $X$, $T^2$ the subbundle spanned by $X,V$.  Let $T^3\subset T\mathbb{S}$ be the annihilator of $\theta$.  Since $X$ and $V$ both annihilate $\theta$, $T^2\subset T^3$.  Moreover, since $\theta$ is annihilated by $X$ and is Lie derived along it, $[\Gamma(T^1),\Gamma(T^2)]=\Gamma(T^3)$.  Because $\theta\wedge d\theta$ does not vanish when pulled back along on any section of the cotangent bundle, it also does not vanish on $\mathbb{S}$ and so $T^3$ is not Frobenius integrable at any point, and thus $[\Gamma(T^3),\Gamma(T^3)]=\Gamma(T\mathbb{S})$

It remains only to show that $[\Gamma(T^1),\Gamma(T^2)]=\Gamma(T^3)$.  It is sufficient to prove that $X,V,[X,V]$ are linearly independent.  As in the proof of Theorem \ref{conformalLorentzian}, it is convenient to work with a particular choice of vector field $V$. Define the tensor $h\in \operatorname{Sym}^2 T(T^*\mathbb{M})$ to be the vertical Hessian of $H$.  In coordinates,
$$h^{ij} = \frac{\partial^2H}{\partial p_i\partial p_j},$$
and let $h_{ij}$ be the inverse of $h^{ij}$.  These two tensors can be used to raise and lower indices. Let $\epsilon$ be the associated volume tensor in the fiber.  In coordinates
$$\epsilon = \epsilon^{ijk}dp_i\otimes dp_j\otimes dp_k.$$
The angular momentum
$$L^i = \epsilon^{ijk} p_j h_{k\ell}\frac{\partial}{\partial p_\ell}$$
kills $H$, since $\partial H/\partial p_\ell = p^\ell$, and so is tangent to the null cone bundle $\widetilde{\mathbb{S}}$.  Furthermore, on homogeneous functions of degree zero along the null cone, $L^i$ factors through a scalar operator $L^i = p^i V$, because $p^i$ and $\partial/\partial p_i$ are an orthogonal basis for the orthogonal complement of $p_i$.  This defines a vertical vector $V$.  To show that $X,V,[X,V]$ are linearly independent, it is enough to show that $d\theta([X,L^i],L^j)\not=0$.  The Lie bracket is given by
$$[X,L^i] = - \epsilon^{ijk} p_j \frac{\partial}{\partial x^k} + T^i_j\frac{\partial}{\partial p_j}$$
for some tensor $T$.  So
$$d\theta([X,L^i],L^j) =  \epsilon^{imn} p_m h_{kn} \epsilon^{jkn} p_k = p^ip^j - p^kp_kh^{ij}$$
which does not vanish when restricted to the null cone.

Thus by Lemma \ref{J2characterization}, $\mathbb{S}$ equipped with its geodesic spray and vertical vector field is locally isomorphic to the space $J^2$.  Because of the preferred direction $V$, the inclusion of bundles $T^1\subset T^2$ splits, and thus gives rise to a third-order differential equation the distinguished curves of which are the fibers of $\mathbb{S}\to\mathbb{M}$.  The entire procedure is reversible, by the construction of the preceding section, which establishes Theorem \ref{locallyisomorphic}.

\section{Recovering the degenerate metric}\label{Metric}
The degenerate metric on $\mathbb{S}$ is defined as follows.  A point of $\mathbb{S}$ consists of a point $P\in\mathbb{M}$ and $v\in C_P$.  Now, through $v\in C_P$, there is a uniquely defined {\em osculating conic} to $C_P$ at $v$.  This osculating conic in turn defines a unique conformal metric $h_{P,v} : T^*\mathbb{M}\times T^*\mathbb{M} \to \mathbb{R}$.  A degenerate conformal Lorentzian metric is defined by pullback on the subspace of the cotangent bundle of $\mathbb{S}$ that annihilates the vertical direction:
$$g_{P,v}(\alpha,\beta) = h_{P,v}(\pi_* \alpha,\pi_* \beta).$$
A proper degenerate conformal Lorentzian metric is obtained by dualizing.\footnote{If $V$ is a vector space and $W\subset V$, and $B$ is a nondegenerate bilinear form on $W$, then $B$ gives rise to a linear isomorphism $T_B : W\to W'$.  The dual (degenerate) form on $V'$ is given by the mapping $T_{\tilde{B}} : V' \to V'/W^\perp \xrightarrow{\cong} W' \xrightarrow{T_B^{-1}} W\xrightarrow{\subset} V$ where the first is the quotient map, the second is the natural isomorphism, the third is the inverse of $T_B$, and the last is the inclusion map.} In order to derive the formula for the metric \eqref{Chernmetric}, it is necessary to obtain explicit formulas for the osculating conic of a projective curve.  The overall program is inspired by the work of Wilczynski \cite{Wilczynski}.

Suppose that $\mathbf{\phi}(t) = (\phi_1(t),\phi_2(t),\phi_3(t))$ parametrically specifies the homogeneous coordinates of a projective curve, with $\det(\phi,\phi',\phi'')\not=0$..  The projective curve is a conic provided that there exists a $3\times 3$ symmetric non-singular matrix $A$ such that
$$\mathbf{\phi}^T A \mathbf{\phi} = 0.$$
Supposing that $\mathbf{\phi}$ is given, the task is to determine a matrix $A$ such that at a given point $t=t_0$ this holds to as many orders in the expansion in powers of $t-t_0$ as possible.  Since $A$ is regarded projectively, it has $5$ independent numerical components.  These are obtained by solving the system of $5$ equations linear in the entries of $A$:
\begin{align*}
(\phi^T A \phi)(t_0) &= 0\\
(\phi^T A \phi)'(t_0) &= 0\\
(\phi^T A \phi)''(t_0) &= 0\\
(\phi^T A \phi)'''(t_0) &= 0\\
(\phi^T A \phi)^{(4)}(t_0) &= 0.
\end{align*}
Once such a matrix is found, the obstruction to continuing to the fifth order is the derivative $(\phi^T A \phi)^{(5)}(t_0)$, and is the projective curvature associated to the curve.

By the first two equations, $A\phi(t_0)$ is proportional to the cross product $\phi(t_0)\times\phi'(t_0)$, and since $A$ is taken projectively, we can fix a scale by taking
$$A\phi(t_0) = \frac{\phi(t_0)\times\phi'(t_0)}{\det(\phi(t_0),\phi'(t_0),\phi''(t_0))}$$
or, equivalently, $\phi(t_0)^TA\phi''(t_0)=1$.  The third and fourth equations then give, respectively
\begin{align*}
\phi'^T(t_0)A\phi'(t_0) &= -1\\
\phi'^T(t_0)A\phi''(t_0) &= -\frac{1}{3}\frac{\det(\phi,\phi',\phi''')(t_0)}{\det(\phi,\phi',\phi'')(t_0)}.
\end{align*}
The final equation now gives
$$3\phi''^T(t_0)A\phi''(t_0) + 4\phi'^T(t_0)A\phi'''(t_0) = -\frac{\det(\phi,\phi',\phi^{(4)})(t_0)}{\det(\phi,\phi',\phi'')(t_0)}.$$

Now the coordinates of the curve $\phi(t)$ satisfy a third-order differential equation
$$\phi'''(t)  = h_0(t)\phi(t) + h_1(t)\phi'(t) + h_2(t)\phi''(t)$$
where $h_i$ are given explicitly in terms of determinants of $\phi$ and its first three derivatives as in  \eqref{linearizedode}.  The above equations reduce to
\begin{align*}
\phi^T(t_0)A\phi(t_0) &= 0\\
\phi^T(t_0)A\phi'(t_0) &= 0\\
\phi^T(t_0)A\phi''(t_0) &= 1\\
\phi'^T(t_0)A\phi'(t_0) &= -1\\
\phi'^T(t_0)A\phi''(t_0) &= -\frac{1}{3}h_2(t_0)\\
3\phi''^T(t_0)A\phi''(t_0) + 4\phi'^T(t_0)A\phi'''(t_0) &= -h_2^2(t_0)-h_2'(t_0)-h_1(t_0).
\end{align*}
The last equation simplifies by substituting \eqref{linearizedode} for $\phi'''$ and then using the remaining equations to give
$$3\phi''^T(t_0)A\phi''(t_0) = \frac{1}{3}h_2^2(t_0)-h_2'(t_0)+3h_1(t_0).$$
The obstruction $(\phi^TA\phi)^{(5)}(t_0)$ can now be calculated by expanding any terms involving $\phi''', \phi^{(4)}, \phi^{(5)}$ in terms of lower order and then using the above equations.  We find that
\begin{equation}\label{projectivecurvature}
(\phi^TA\phi)^{(5)}(t_0) = 12h_0(t_0) + 4h_1(t_0) + \frac{8}{9}h_2(t_0)^3 - 6h_1'(t_0) -4h_2(t_0)h_2'(t_0) + 2h_2''(t_0),
\end{equation}
which is {\em precisely} the W\"{u}nschmann invariant for the equation \eqref{linearizedode}.

The matrix $A$ obtained from this procedure is also of interest, because it gives the conformal Lorentzian structure.  In the basis $(\phi,\phi',\phi'')$,\footnote{That this basis is ``nonholonomic'' (i.e., nonconstant) is significant for the inverse construction, discussed presently.} the symmetric 2-tensor $A$ is given by
$$A=\begin{bmatrix}
0&0&1\\
&&\\
0&-1&-\frac{1}{3}h_2(t_0)\\
&&\\
1&-\frac{1}{3}h_2(t_0)& \frac{h_2^2(t_0)-3h_2'(t_0)+9h_1(t_0)}{9}
\end{bmatrix}.$$

When, as in section \ref{Hamiltonian}, the curve $\phi(t)\in T_P\mathbb{M}$ is the linearization of the solution $f(x;P)$ to the differential equation at a point $P\in\mathbb{M}$, then \eqref{linearizedode} is the linearization of the differential equation at $P$:
$$h_0(x) = F_y(x,f(x;P),f_x(x;P),f_{xx}(x;P)),\quad  h_1(x) = F_p(x,f(x;P),f_x(x;P),f_{xx}(x;P)),$$
$$h_2(x) = F_q(x,f(x;P),f_x(x;P),f_{xx}(x;P))$$
The projective curvature obtained from \eqref{projectivecurvature} agrees with the W\"{u}nschmann invariant \eqref{Wunschmann} of the original equation. Substituting in for the components of the 2-tensor $A$ gives the metric
$$g = -\left(\frac{\partial}{\partial p}\right)^2 + 2\frac{\partial}{\partial q}\frac{\partial}{\partial y} - \frac{2}{3}F_q\frac{\partial}{\partial p}\frac{\partial}{\partial q} + \frac{F_q^2-3 V(F_q) + 9 F_p}{9}\left(\frac{\partial}{\partial q}\right)^2.$$
The dual degenerate conformal metric, defined on the full tangent bundle of $\mathbb{S}$, agrees with \eqref{Chernmetric}. 

\section{Inverse problems}
By Theorem 1, every causal geometry gives rise to a third-order differential equation.  A more subtle inverse problem is, given a  rank three degenerate conformal Lorentzian metric on the tangent bundle of a four manifold, when is there a causal geometry from which it arises?  More precisely, given a one-parameter family of plane conics, defined by $3\times 3$ nondegenerate symmetric 2-tensors $A(t)$, when is there a plane curve $\phi(t)$ such that, as $t\to t_0$,
$$\phi(t)^TA(t_0)\phi(t) = O(t-t_0)^5?$$
Interchanging $t$ and $t_0$, a necessary and sufficient condition is that
$$\phi(t_0)^TA(t)\phi(t_0) = O(t-t_0)^5.$$
So at each $t_0$, the point $\phi(t_0)$ must satisfy five equations
\begin{align*}
\phi(t_0)^TA(t_0)\phi(t_0) &= 0\\
\phi(t_0)^TA'(t_0)\phi(t_0) &= 0\\
\phi(t_0)^TA''(t_0)\phi(t_0) &= 0\\
\phi(t_0)^TA'''(t_0)\phi(t_0) &= 0\\
\phi(t_0)^TA^{(4)}(t_0)\phi(t_0) &= 0
\end{align*}
In this system, the matrices $A(t_0), A'(t_0), A''(t_0), A'''(t_0), A^{(4)}(t_0)$ should be regarded as given, and the 3-vector $\phi(t_0)$ as unknown homogeneous coordinates.  The system is clearly overdetermined: the two (projective) degrees of freedom in $\phi(t_0)$ must satisfy five equations.  Geometrically the point $\phi(t_0)$ must simultaneously lie on five plane conics, but the intersection of more than two plane conics is generically empty.

The overdetermined system gives rise to a consistency condition on $A$ and its first four derivatives, which we now describe.  In the generic case, we can solve the linear equations for a $3\times 3$ symmetric matrix $X$
\begin{equation}\label{cramdownsystem}
\begin{aligned}
\tr A(t_0)X &= 0\\
\tr A'(t_0)X &= 0\\
\tr A''(t_0)X &= 0\\
\tr A'''(t_0)X &= 0\\
\tr A^{(4)}(t_0)X &= 0.
\end{aligned}
\end{equation}
This can be solved uniquely for $X$, up to scaling, provided the system has rank five.  The consistency condition is then that the solution $X$ has rank one and so splits as an outer product
$$X = \phi(t_0)\phi(t_0)^T.$$
This happens if and only if every $2\times 2$ minor of $X$ vanishes.\footnote{The minors are not independent, however.  The variety in $\mathbb{P}(\operatorname{Sym}^2(\mathbb{R}^3))$ on which the $2\times 2$ minors of a symmetric $3\times 3$ matrix vanish is the well-known Veronese surface, which is not a complete intersection.  Locally it is the zero locus of any three minors coming from distinct rows and columns.}

\subsection{Intermediate cases}
When \eqref{cramdownsystem} has rank one, the W\"{u}nschmann invariant vanishes.  When it has full rank, then it gives rise to a causal curve provided the $2\times 2$ minors of the solution $X$ all vanish.  A calculation done in {\it Mathematica} shows that, for structures coming from third-order equations, these are the only two possibilities: either the system has full rank (and thus nonzero W\"{u}nschmann) or it has rank one (and zero W\"{u}nschmann).

However, {\em a priori} such a system, coming from an arbitrary degenerate conformal Lorentzian structure in four dimensions, can have any rank between $1$ and $5$.  It is interesting to understand why these intermediate cases do not lead to causal curves.

{\em Rank 2.}  Suppose that \eqref{cramdownsystem} has rank two (in an interval around $t_0$).  Then $A''(t) = f(t)A(t) + g(t) A'(t)$ for some functions $f$ and $g$.  The initial matrices $A(t_0), A'(t_0)$ can be brought simultaneously into diagonal form by a transformation of the form
$$A(t_0)\mapsto M^TA(t_0)M,\quad A'(t_0)\mapsto M^TA'(t_0)M.$$
Relative to this fixed initial basis, $A(t)$ and $A'(t)$ remain diagonal throughout the interval of existence.  It is convenient to put 
$$\vec{A}(t) = \begin{bmatrix}A_{11}(t)\\A_{22}(t)\\A_{33}(t)\end{bmatrix},\quad \vec{X}=\begin{bmatrix}X_{11}\\X_{22}\\X_{33}\end{bmatrix}.$$
The two equations of \eqref{cramdownsystem} reduce to
\begin{align*}
\tr A(t)X &= \vec{A}(t)\cdot \vec{X}=0\\
\tr A'(t)X &= A'_{11}(t)X_{11}+A'_{22}(t)X_{22}+A'_{33}(t)X_{33}=0
\end{align*}
Solving:
$$\vec{X} = \vec{A}(t)\times \vec{A}'(t)$$
up to an overall scale. Now if $\phi(t)$ is a curve solving
$$\phi(t)^TA(t)\phi(t) = 0\quad \phi(t)^TA'(t)\phi(t)=0$$
then the entries of $\phi$ must square to the entries of $\vec{X}$, so that
$$\phi(t) = \begin{bmatrix}\pm\sqrt{\vec{A}\times\vec{A}'\cdot e_1}\\\pm\sqrt{\vec{A}\times\vec{A}'\cdot e_2}\\\pm\sqrt{\vec{A}\times\vec{A}'\cdot e_3}\end{bmatrix}.$$
Differentiating gives
$$\phi_i'(t) = \pm\frac{g(t)}{2} \sqrt{\vec{A}\times\vec{A}'\cdot e_i}$$
and so $\phi'$ is proportional to $\phi$.  Thus the range of $\phi(t)$ is a projective point (in the complex sense).\footnote{Indeed, if $\phi(0)$ and $\phi'(0)$ are linearly independent, then $\phi(0)\times\phi'(0)\cdot\phi(t)$ satisfies a second order ode with both initial conditions zero, so $\phi(0)\times\phi'(0)\cdot\phi(t) \equiv 0$ which implies that $\phi(t)$ is constrained to a projective line.  If $\phi(0)$ and $\phi'(0)$ are linearly dependent, then smoothness of dependence on initial conditions gives the result.}  Therefore there are one or no solutions, depending on whether the square roots are all real.

{\em Rank 3 and 4.}  We argue indirectly that, if \eqref{cramdownsystem} has rank 3 or 4 throughout an interval and $\phi(t)$ is a $C^3$ solution in that interval, then $\phi(t)$ parameterizes either a projective point (as in the rank 2 case) or a line (which is degenerate from our point of view).  We were unable to devise a direct argument analogous to the rank 2 case.  In general, because $\phi(t)$ is a three-vector, there must be a non-trivial linear relation between $\phi(t)$ and its first three derivatives.  This must either give a proper third-order equation in an interval, or else there is a linear relation between $\phi,\phi',\phi''$.  In the latter case, $\phi(t)$ does indeed parameterize a line (if it satisfies a second order equation) or a point (if the equation is first order).  In the former case, the coefficients of $A(t)$ can be given in terms of $\phi(t),\phi'(t),\phi''(t)$ and the coefficients of the third-order equation $h_1(t),h_2(t),h_3(t)$, as in the previous section.  But, as already indicated, this implies that the system \eqref{cramdownsystem} has rank either 1 or 5, a contradiction.

\bibliography{thirdorderodes}{}
\bibliographystyle{hep}
\end{document}